\setlist[itemize]{topsep=0.5ex,itemsep=0.5ex,parsep=0.5ex}
\setlist[enumerate]{topsep=0.5ex,itemsep=0.5ex,parsep=0.5ex}
\setlist[description]{topsep=0.5ex,itemsep=0.5ex,parsep=0.5
ex}
\crefname{lem}{Lemma}{Lemmas}
\crefname{thm}{Theorem}{Theorems}
\crefname{cor}{Corollary}{Corollaries}
\def\NAT@spacechar{~}
\DeclarePairedDelimiter{\floor}{\lfloor}{\rfloor}
\newcommand\subsetcong{\mathrel{\text{%
    \setbox0\hbox{$\subseteq$}%
    \rlap{\hbox to \wd0{\hss\hss\hss\raisebox{1.5\height}{$\sim$}\hss}}\box0
}}}
\newcommand{\CR}[1]{\ensuremath{\protect\textsf{\textup{cr}}(#1)}}
\newcommand{\RCR}[1]{\ensuremath{\protect\overline{\textsf{\textup{cr}}}(#1)}}
\newcommand{\V}[1]{\ensuremath{\protect|#1|}} 
\newcommand{\E}[1]{\ensuremath{\protect\Vert #1\Vert}} 
\newcommand{\FFF}{\ensuremath{\mathcal{F}}}
\renewcommand{\epsilon}{\varepsilon}
\renewcommand{\emptyset}{\varnothing}
\renewcommand{\geq}{\geqslant}
\renewcommand{\leq}{\leqslant}
\newcommand{\rd}{rectilinear drawing}
\newcommand{\rds}{rectilinear drawings}
\newcommand{\rcn}{rectilinear crossing number}
\newcommand{\rcns}{rectilinear crossing numbers}
\newcommand{\scg}{single-crossing graph}
\newcommand{\gi}{\ensuremath{\protect G_i}}
\newcommand{\gim}{\ensuremath{\protect G^-_i}}
\newcommand{\gj}{\ensuremath{\protect G_j}}
\newcommand{\gjm}{\ensuremath{\protect G^-_j}}
\newcommand{\qi}{\ensuremath{\protect Q_i}}
\newcommand{\qim}{\ensuremath{\protect Q^-_i}}
\newcommand{\Oh}[1]{\ensuremath{\protect O(#1)}}
\theoremstyle{plain}
\newtheorem{thm}{Theorem}
\newtheorem{lem}[thm]{Lemma}
\newtheorem{cor}[thm]{Corollary}
\newtheorem{claim}{Claim}
\crefname{obs}{Observation}{Observations}
\newtheorem*{lem*}{Lemma}
\theoremstyle{definition}
\newtheorem*{conj*}{Conjecture}
\date{}
\begin{document}
\title{\bf\boldmath\fontsize{18pt}{18pt}\selectfont
Rectilinear Crossing Number of Graphs Excluding Single-Crossing Graphs as Minors}

\author{
Vida Dujmovi{\'c}\,\footnotemark[1]\qquad
Camille La Rose\,\footnotemark[1]\qquad
}

\maketitle

\footnotetext[1]{School of Computer Science and Electrical
  Engineering, University of Ottawa, Ottawa, Canada
  (\texttt{vida.dujmovic@uottawa.ca, claro100@uottawa.ca}). Research
  of Vida Dujmovi\'c is supported by NSERC and the University Research Chair
  grant by University of Ottawa.}

\begin{abstract}
The crossing number, \CR{G}, of a graph $G$ is the minimum number of
crossings in a drawing of $G$ in the plane. A rectilinear drawing
(also known as, straight-line drawing)  of a graph $G$ represents
vertices of $G$ by a set of points in the plane and represents each
edge of $G$ by a straight-line segment connecting its two
endpoints. The rectilinear crossing
number, \RCR{G}, of $G$ is the minimum number of crossings in a
rectilinear drawing of $G$.

The crossing lemma implies that as soon as a graph family has $n$-vertex graphs with more than $(3+\epsilon)\cdot n$ edges ($\epsilon>0)$, the best crossing number upper bound one can hope for is $O(c_\epsilon\cdot n)$. We are interested in families of graphs that admit such linear crossing number upper bounds. By the crossing lemma, the only candidates are families of graphs with a linear number of edges.

Graphs of bounded genus and bounded degree (B{\"{o}}r{\"{o}}czky, Pach
and T{\'{o}}th, 2006)
 and in fact all bounded
 degree proper minor-closed families (Wood and Telle, 2007) have been shown to
admit linear crossing number, with tight $\Theta(\Delta n)$ bound
shown by Dujmovi\'c, Kawarabayashi, Mohar and Wood, 2008. 
  Neither bounding degree nor excluding a
minor can be dropped from these statements since there are $n$-vertex
cubic graphs that have crossing number $\Theta(n^2)$; and,
$K_{3,n}$, which is a $K_5$-minor-free graph, also has crossing number $\Theta(n^2)$.

Much less is known about \rcn. It is not bounded by any function of
the crossing number. It is also known that for every $n$-vertex graph $G$ of bounded degree $\RCR{G}\in \Oh{(\CR{G}+n)\cdot \log\,n}$. Thus the above results imply that  $n$-vertex bounded degree graphs from a minor-closed family have \Oh{n\log n} \rcn.  We prove that graphs that exclude a single-crossing graph as a minor have the \rcn\ $O(\Delta n)$. 
This dependence on $n$ and $\Delta$ is best possible. A
single-crossing graph is a graph whose crossing number is at most
one. Thus the result applies to $K_5$-minor-free graphs, for
example. It also applies to
bounded treewidth graphs, since each family of bounded treewidth
graphs excludes some fixed planar graph as a minor. Prior to our work,
the only bounded degree minor-closed families known to have linear
\rcn\ were bounded degree graphs of bounded treewidth  (Wood and Telle, 2007), as
well as, bounded degree $K_{3,3}$-minor-free graphs  (Dujmovi\'c, Kawarabayashi, Mohar and Wood, 2008). In
the case of bounded treewidth graphs, our $O(\Delta n)$ result is
again tight and improves on the previous best known bound of
$\Oh{\Delta^2\cdot n}$ by  Wood and Telle, 2007 (obtained for convex geometric drawings). 

Finally, the paper demonstrates how to conduct clique-sums in a straight-line drawing setting. As such, it may provide a useful tool for eventually proving that all bounded degree minor-closed families of graphs have linear \rcn.
\end{abstract}

\section{\Large Introduction}
\label{Introduction}

In this article graphs are undirected, simple, and finite, unless
stated otherwise. For a graph $G$, with vertex set $V(G)$ and edge set
$E(G)$, let $\V{G}:=|V(G)|$ and $\E{G}:=|E(G)|$. \footnote{For each
  vertex $v$ of $G$, let $N_G(v):=\{w\in V(G):vw\in E(G)\}$ be the
  neighbourhood of $v$ in $G$. The \emph{degree} of $v$, denoted by
  $\deg_G(v)$, is $|N_G(v)|$. Let $\Delta(G)$ be the maximum degree of
  $G$. When the graph is clear from the context, we will sometimes
  write $\deg(v)$ instead of  $\deg_G(v)$ and  $\Delta$  instead of $\Delta(G)$. } 
The \emph{crossing number} of a graph $G$, denoted by \CR{G}, is the
minimum number of crossings in any drawing\footnote{A \emph{drawing}
  of a graph represents each vertex by a distinct point in the plane,
  and represents each edge by a simple closed curve between its
  endpoints, such that the only vertices an edge intersects are its
  own endpoints, and no three edges intersect at a common point
  (except at a common endpoint). A drawing is \emph{rectilinear} if
  each edge is a line-segment, and is \emph{convex} if, in addition,
  the vertices are in convex position (that is, the convex hull of the
  vertices forms a convex polygon). Rectilinear drawings are also
  known as \emph{straight-line drawings} in the literature. 
  A \emph{crossing} is a point of intersection between two edges
  (other than a common endpoint). A drawing with no crossings is
  \emph{crossing-free}. A graph is \emph{planar} if it has a
  crossing-free drawing.} of $G$ in the plane. The \emph{rectilinear
  crossing number} of a graph $G$, denoted by \RCR{G}, is the minimum number of  crossing in any \rd\ of $G$ in the plane.

Crossing number is a fundamental and extensively studied graph parameter with wide ranging applications and rich history (see the survey by \citet{zbMATH06171962} for over 700 references on the crossing number and its variants). 
	Computationally the problem, of determining the crossing number of a given graph, is notoriously difficult. Computing the crossing number is  NP-hard by Garey and Johnson \cite{gareyjohnson1983}, even for planar graph plus an edge \cite{DBLP:journals/siamcomp/CabelloM13}. It is hard to approximate even for cubic graphs \cite{DBLP:journals/dcg/Cabello13} and until recently there were no approximation algorithms with sub-polynomial in $n$ approximation factor even for bounded degree graphs \cite{DBLP:conf/stoc/ChuzhoyT22}.

On the positive side, \citet{kawarebayashireed2007} give $\Oh{f(k)\cdot \V{G}}$ algorithm for deciding whether a given graph $G$ has crossing number at most $k$.

Since computing the exact, or even an asymptotic, crossing number of a graph is hard, great deal of past research has been focused on deriving asymptotic bounds.  Regardless of the applications, be it visualization or circuit design \cite{BL84, Leighton83, Leighton84}, having as few crossings as possible is a desirable property in a drawing of a graph. This natural leads to a study on upper bounds and lower bounds on the crossing number of various graph families. 

Trivially, the crossing number of every graph $G$ is at most $\Oh{\E{G}^2}$. For some graphs this bound is asymptotically tight, including for example, the complete graph and a random cubic graph. For others, it is far from optimal. 

Every planar triangulation $G$ is known to have $3\V{G}-6$ edges (if
$\V{G}\geq 3$). Consequently, for every $n\geq 3$, there are $n$-vertex
graphs with  $3n-6$ edges that can be drawn with zero crossings. The following result, known as the \emph{crossing lemma}, tells us that as soon as a graph has a little more than $3\V{G}$ edges, it must have vastly more than zero crossings in every drawing. Specifically,  the crossing lemma, proposed by Erdos and Guy \cite{ErdosP.1973CNP},  proved by Leighton \cite{Leighton1983ComplexityII} and Ajtai et al. \cite{AjtaiM.1982CS}; and, subsequently improved \cite{Proofs3, Montaron-JGT05, PRTT-DCG06} states the following.

	\begin{thm} 
  For any $\epsilon > 0$, there exists $c_{\epsilon}$ such that, every graph $G$ with $\E{G} > (3+\epsilon)\cdot \V{G}$ edges, 
	 $$ \CR{G} \geq c_{\epsilon} \frac{\E{G}^3}{\V{G}^2} $$
	\end{thm}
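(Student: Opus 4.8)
The plan is to bootstrap the cubic bound from the elementary linear inequality
$$\CR{H} \;\geq\; \E{H} - 3\V{H}$$
(valid for \emph{every} simple graph $H$) by a probabilistic vertex-sampling (``deletion'') argument. \textbf{Step 1.} First I would verify this linear inequality. If $\V{H} \geq 3$, fix a drawing of $H$ realizing $\CR{H}$ and, as long as a crossing remains, delete one of the two edges through it; each deletion kills at least one crossing, so after at most $\CR{H}$ deletions the drawing is crossing-free, leaving a planar simple graph on $\V{H}$ vertices, which has at most $3\V{H}-6$ edges. Hence $\E{H} - \CR{H} \leq 3\V{H} - 6 \leq 3\V{H}$. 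If $\V{H} \leq 2$ then $\E{H} - 3\V{H} < 0 \leq \CR{H}$, so the inequality holds trivially. Stating it in this scale-free form, with no hypothesis on $H$, is what lets it be averaged over a random subgraph without any conditioning.

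\textbf{Step 2.} Write $n := \V{G}$ and $m := \E{G}$, and fix a drawing $D$ of $G$ with $\CR{G}$ crossings; we may assume $D$ is optimal, and hence --- a standard fact about crossing-minimal drawings --- that no two edges sharing an endpoint cross, so every crossing of $D$ is an intersection of two edges with four distinct endpoints. Pick a parameter $p \in (0,1]$ to be chosen and let $S$ be a random subset of $V(G)$ including each vertex independently with probability $p$; set $H := G[S]$. Restricting $D$ to $S$ yields a drawing of $H$ whose crossings are exactly the crossings of $D$ with all four endpoints in $S$, so $\CR{H}$ is at most that number. Taking expectations, $\mathbb{E}[\CR{H}] \leq p^4\CR{G}$, while $\mathbb{E}[\E{H}] = p^2 m$ and $\mathbb{E}[\V{H}] = pn$. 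Applying Step 1 to $H$ and using linearity of expectation gives $p^4\CR{G} \geq p^2 m - 3pn$, that is,
$$\CR{G} \;\geq\; \frac{m}{p^2} - \frac{3n}{p^3}.$$

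\textbf{Step 3.} It remains to choose $p$. Since $m > (3+\epsilon)n$, set $p := (3+\tfrac{\epsilon}{2})\,n/m$; then $p \in (0,1)$ because $(3+\tfrac{\epsilon}{2})n < (3+\epsilon)n < m$. Substituting and simplifying yields
$$\CR{G} \;\geq\; \left(\frac{1}{(3+\epsilon/2)^2} - \frac{3}{(3+\epsilon/2)^3}\right)\frac{m^3}{n^2} \;=\; \frac{\epsilon/2}{(3+\epsilon/2)^3}\cdot\frac{m^3}{n^2},$$
so the theorem holds with $c_\epsilon := \dfrac{\epsilon/2}{(3+\epsilon/2)^3} > 0$. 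Any admissible $p$ slightly above $3n/m$ works; the sharper choice $p = 4n/m$ when $m \geq 4n$ recovers the familiar constant $\tfrac{1}{64}$.

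I do not expect a real obstacle here: the entire content is the amplification idea, so the only things to get right are the bookkeeping in Step 2 --- in particular that a crossing of an optimal drawing has four distinct endpoints and therefore survives the sampling with probability exactly $p^4$ --- together with keeping Step 1 in the unconditional form $\CR{H} \geq \E{H} - 3\V{H}$ so that it may be averaged over a random $H$ that could be dense, sparse, or tiny.
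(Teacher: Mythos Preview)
Your argument is correct; it is precisely the standard probabilistic amplification proof of the crossing lemma (the version popularised in \emph{Proofs from THE BOOK}). Note, however, that the paper does not give its own proof of this theorem at all: the crossing lemma is stated as background, with citations to Leighton, Ajtai et al., and later refinements, and is used only to motivate why linear crossing number is the best one can hope for. So there is nothing in the paper to compare your proof against --- your write-up simply supplies the well-known omitted argument.
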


An immediate consequence of this theorem is that all graphs $G$ that
have at least $(3+\epsilon)\cdot \V{G}$ edges have the crossing number
at least $\Omega (\V{G})$. We say that a family of graphs has a
\emph{linear crossing number} if there exists a constant $c$ such that
every graph $G$ in the family has $\CR{G} \leq c\cdot |G|$. The
crossing lemma tells us that for most graph families the best one can
hope of is that they have linear crossing number. By the crossing
lemma, any family of graphs that has members with the number of edges
superlinear in the number of vertices cannot have linear crossing
number.  Thus the only candidates for linear crossing number are
families of graphs whose members all have linear number of edges. One
examples of such families is a family of graphs whose members have the
maximum degree bounded by a constant. Another example is a family of
graphs that exclude some fixed graph $H$ as a minor.\footnote{Let $vw$ be
  an edge of a graph $G$. Let $G'$ be the graph obtained by
  identifying the vertices $v$ and $w$, deleting loops, and replacing
  parallel edges by a single edge. Then $G'$ is obtained from $G$ by
  \emph{contracting} $vw$. A graph $X$ is a \emph{minor} of a graph
  $G$ if $X$ can be obtained from a subgraph of $G$ by contracting
  edges.  A graph~$G$ is \emph{$X$-minor-free} if $X$ is not a minor
  of~$G$. A family of graphs \FFF\ is \emph{minor-closed} if
  $G\in\FFF$ implies that every minor of $G$ is in \FFF. \FFF\ is
  \emph{proper} if it is not the family of all graphs.} That such
families of graphs have linear number of edges follows from the
result proved independently by 
\citet{K84} and \citet{T84}. It states that $H$-minor free graphs $G$ have $O(|H|\log |H| \cdot |G|)$
edges.

It turns out that these families of graphs do not have linear crossing number. Consider, for example, the graph $K_{3,n}$. It has linear number of edges and it is $K_5$-minor-free, yet it is known to have crossing number $\Omega(n^2)$ \cite{DBLP:journals/combinatorics/Nahas03, richter1996crossing}. Similarly, consider a family of all cubic graphs. All its members have linear number of edges and yet it is known that, for every large enough $n$, there is a cubic $n$-vertex graph whose crossing number is $\Omega(n^2)$ \cite{kostochka1992bounds,diaz2003bounds,djidjev2006crossing,Leighton1983ComplexityII}.

 Thus to admit a linear crossing number, it is not enough for a family of graphs to have a bounded degree only or to only exclude a fixed graph as a minor. It turns out however that having both of these properties is enough. 
\citet{DBLP:journals/ijfcs/BorozkyPT06} first showed such a result for
bounded Euler genus graphs\footnote{Let $\Sigma$ be a surface. An
  \emph{embedding} of a graph $G$ in $\Sigma$ is a crossing-free
  drawing of $G$ in $\Sigma$. The \emph{Euler genus} of $\Sigma$
  equals $2h$ if $\Sigma$ is the sphere with $h$ handles, and equals
  $c$ if $\Sigma$ is the sphere with $c$ cross-caps. The \emph{Euler
    genus} of a graph $G$ is the minimum Euler genus of a surface in
  which there is an embedding of $G$.} as detailed in the following
theorem. Note that by the above mentioned result by \citet{K84} and
\citet{T84},  $||G|| \in
O(|G|) $ for all graphs $G$ from a proper minor closed family
of graphs. That fact will be used throughout this article, starting
with the next theorem. 

	\begin{thm}[\cite{pach2006crossing,DBLP:journals/ijfcs/BorozkyPT06}]\label{thm:pachtoth2006}
		For every integer $\gamma \geq 0$, there is a function $f$ such that every graph $G$ with Euler genus $\gamma$ has crossing number
		\[ \CR{G} \leq f(\gamma) \cdot \sum_{v \in V(G)}
                  \deg(v)^2 \leq 2\cdot f(\gamma) \cdot \sum_{v \in V(G)}
                  \in  \Oh{f(\gamma)\cdot \Delta (G) \cdot |G|} \]
	\end{thm}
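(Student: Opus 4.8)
\emph{Proof plan.} The plan is to prove the displayed inequality $\CR{G}\le f(\gamma)\sum_{v\in V(G)}\deg(v)^2$ by induction on the Euler genus $\gamma$, and then to read off the bound $\Oh{f(\gamma)\cdot\Delta(G)\cdot|G|}$ from it. For $\gamma=0$ the graph is planar, so $\CR{G}=0$ and any $f$ works. Since crossing number, Euler genus, $\sum_v\deg(v)^2$ and $|G|$ are all additive over connected components, we may assume $G$ is connected. Fix a minimum-Euler-genus embedding of $G$ in a surface $\Sigma$; this embedding is cellular, and since $\gamma\ge 1$ the first homology of $\Sigma$ is generated by cycles of $G$, so $G$ has a non-separating, non-contractible cycle $C$ — take one of minimum length.

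The inductive step is surgery along $C$. Cutting $\Sigma$ open along $C$ and capping the resulting boundary circle(s) with disk(s) produces a surface $\Sigma^-$ of Euler genus at most $\gamma-1$ carrying a graph $G^-$: the cycle $C$ is replaced by one or two vertex-disjoint copies of it bounding new disk-faces $F_1,F_2$, and each vertex $v$ of $C$ is split into two vertices $v',v''$ whose non-$C$ edges are the two arcs into which the two $C$-edges at $v$ divide the rotation at $v$. In particular $\deg_{G^-}(v')+\deg_{G^-}(v'')=\deg_G(v)+2\le 2\deg_G(v)$, so $\deg_{G^-}(v')^2+\deg_{G^-}(v'')^2\le(\deg_G(v)+2)^2\le 4\deg_G(v)^2$, and hence $\sum_{u\in V(G^-)}\deg_{G^-}(u)^2\le 4\sum_{v\in V(G)}\deg_G(v)^2$. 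Applying the induction hypothesis to $G^-$ yields a drawing of $G^-$ with at most $4f(\gamma-1)\sum_v\deg_G(v)^2$ crossings; for the re-gluing below we carry along a strengthened statement guaranteeing a near-optimal drawing in which a prescribed empty face is the outer face (this is harmless, as one can invert onto the sphere, and the stronger form is preserved by the construction below).

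Given such a drawing $D^-$ of $G^-$, we re-glue: identify the two copies of $C$ back into a single cycle to obtain a drawing of $G$, paying few new crossings. Taking $F_1$ as the outer face, one copy of $C$ is the outer boundary and the other bounds an inner empty face; we bring the two copies together by routing a thin bridging strip through the drawing and identify them. The new crossings are of two types: for each $v\in C$, the two edge-bundles that were separated onto $v'$ and $v''$ must be re-interleaved around the merged vertex, costing $\Oh{\deg_G(v)^2}$ there; and crossings produced by threading the bridging strip through $D^-$, which one bounds, using the minimality of $C$ together with standard length-versus-degree-sum relations for embedded graphs, by $\Oh{\sum_v\deg_G(v)^2}$. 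Summing, $\CR{G}\le 4f(\gamma-1)\sum_v\deg_G(v)^2+\Oh{\sum_v\deg_G(v)^2}$, so $f(\gamma)\le 4f(\gamma-1)+\Oh{1}$ closes the induction with, say, $f(\gamma)=2^{\Oh{\gamma}}$. Finally $\sum_v\deg_G(v)^2\le\Delta(G)\sum_v\deg_G(v)=2\Delta(G)\,\E{G}\in\Oh{\Delta(G)\cdot|G|}$, since graphs of bounded Euler genus form a proper minor-closed class and therefore have $\Oh{|G|}$ edges; this gives $\CR{G}\in\Oh{f(\gamma)\cdot\Delta(G)\cdot|G|}$.

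I expect the re-gluing step, and in particular bounding its cost by $\Oh{\sum_v\deg_G(v)^2}$, to be the main obstacle. The two copies of $C$ may lie far apart in $D^-$ with the rest of the drawing between them, so one must argue that identifying them is essentially local — an independent $\Oh{\deg_G(v)^2}$ rerouting at each $v\in C$ together with a cheap bridge — rather than costing something like $\Oh{|C|\cdot|G|}$; this is precisely where the choice of $C$ (non-separating, non-contractible, of minimum length) and the propagation of the strengthened hypothesis through the recursion must be exploited. An alternative organisation that may streamline the bookkeeping is to cut $\Sigma$ in a single step along a system of $\Oh{\gamma}$ loops based at one vertex, turning $G$ at once into a planar graph whose re-identifications, taken one loop at a time, admit a similar analysis.
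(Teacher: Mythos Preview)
This theorem is not proved in the paper: it is quoted, with attribution, from \cite{pach2006crossing,DBLP:journals/ijfcs/BorozkyPT06} as background motivation in the introduction, and the paper supplies no argument for it. So there is no ``paper's own proof'' to compare your attempt against.

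As for your sketch on its own merits: the overall architecture --- induction on the Euler genus via surgery along a short non-separating cycle, followed by re-gluing --- is indeed the shape of the argument in the cited sources, and your degree-sum bookkeeping for the cut graph $G^-$ is fine. You are also right to flag the re-gluing as the crux. What you have written there is not yet a proof: ``route a thin bridging strip through the drawing'' and ``standard length-versus-degree-sum relations'' are placeholders, not arguments, and the danger you identify (that naively identifying the two copies of $C$ could cost $\Omega(|C|\cdot\E{G})$ crossings) is real. The original papers do nontrivial work at exactly this point --- controlling how the two copies of $C$ sit in the lower-genus drawing and bounding the cost of merging them in terms of $\sum_v\deg(v)^2$ --- and your plan does not yet supply that mechanism. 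The final paragraph deriving $\Oh{f(\gamma)\cdot\Delta(G)\cdot|G|}$ from the degree-square bound is correct and matches how the paper uses the result.
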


	An improvement on the dependence on $\gamma$ in Theorem~\ref{thm:pachtoth2006} for orientable surfaces was shown by Djidjev and Vrt\'o \cite{djidjev2012planar}, with $\CR{G} \leq c \cdot \gamma \cdot \Delta (G) \cdot |G|$ for some constant $c$.

\citet{WT07} were the first to show that excluding a minor and bounding the maximum degree were sufficient to ensure a linear crossing number, as stated in the next theorem.

	\begin{thm}[\cite{WT07}]\label{thm:wood2006planar}
		For every graph $H$, there is a constant $c:= c(H)$ such that every $H$-minor-free graph $G$ has crossing number
		\[ \CR{G} \leq c\cdot\Delta (G)^2 \cdot |G| \]
	\end{thm}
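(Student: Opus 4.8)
\emph{Proof strategy.} The plan is to derive the theorem from a structural decomposition of $H$-minor-free graphs into planar pieces, followed by a drawing procedure that converts such a decomposition into a drawing with few crossings. The notion I would use is a \emph{planar decomposition} of a graph $G$: a pair $(D,(B_x)_{x\in V(D)})$ where $D$ is a planar graph, each bag $B_x$ is a subset of $V(G)$, the set $\{x\in V(D): v\in B_x\}$ induces a nonempty connected subgraph of $D$ for every $v\in V(G)$, and for every edge $uv\in E(G)$ some bag $B_x$ contains both $u$ and $v$; its \emph{width} is $\max_x\abs{B_x}$. After the standard pruning (repeatedly contracting a node $x$ whose bag is contained in that of a neighbour into that neighbour, which preserves planarity of $D$ and the decomposition axioms) one may assume $\V{D}=O(|G|)$, and then $\E{D}=O(|G|)$ since $D$ is planar.

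\emph{Step 1: structure.} By the graph minor structure theorem of Robertson and Seymour there is a constant $k=k(H)$ such that every $H$-minor-free graph $G$ has a tree-decomposition of adhesion at most $k$ in which every torso is obtained from a graph of Euler genus at most $k$ by adding at most $k$ apex vertices and at most $k$ vortices, each of depth at most $k$. \emph{Step 2: from this structure to a planar decomposition of bounded width.} First, a graph of Euler genus $g$ has a planar decomposition of width $O(g)$: cut the surface along $O(g)$ disjoint noncontractible simple curves to obtain a planar embedded graph, use (a subdivision of) this cut graph as $D$, and place every vertex of $G$ that was split into the bags of all of its copies; this keeps $D$ planar while increasing each bag by $O(g)$. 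Next, the $\le k$ apex vertices of a torso are added to \emph{every} bag of its planar decomposition, and each bounded-depth vortex is path-like and can be threaded into the planar decomposition along the face to which it is attached; both operations increase the width by $O(k)$. Finally, glue the per-torso planar decompositions along the decomposition tree $T$: inserting planar graphs at the nodes of a tree and identifying the shared cliques (of size at most the adhesion $k$) yields a planar graph, and adding each such clique to the relevant bags raises the width by at most $k$. Altogether $G$ has a planar decomposition of width $w=O(f(H))$ for a suitable function $f$.

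\emph{Step 3: drawing from a planar decomposition.} Draw $D$ in the plane without crossings; replace each node $x$ by a small disk holding the $\le w$ vertices of $B_x$, and each edge of $D$ by a thin pairwise-disjoint ``pipe''. Every vertex of $G$, and every edge $uv\in E(G)$, can be drawn locally inside the union of the disks and pipes indexed by the connected node set $\{x: v\in B_x\}\cup\{x: u\in B_x\}$, so that all crossings lie inside a single disk or a single pipe. A disk $x$, respectively a pipe $xy$, carries only arcs of edges of $G$ incident to $B_x$, respectively to $B_x\cap B_y$, hence at most $w\Delta(G)$ arcs and so $O(w^2\Delta(G)^2)$ crossings. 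Summing over the $O(|G|)$ disks and pipes gives
\[
  \CR{G}=O\bigl(w^{2}\,\Delta(G)^{2}\,|G|\bigr)=O\bigl(f(H)^{2}\cdot\Delta(G)^{2}\cdot|G|\bigr),
\]
which is the claimed bound with $c(H):=O(f(H)^2)$.

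The step I expect to be the main obstacle is Step 2: extracting a genuinely \emph{planar} bounded-width decomposition from the structure theorem, in particular threading each vortex into the planar skeleton without blowing up the width, and verifying that gluing the per-torso skeletons along the decomposition tree and identifying bounded-size cliques keeps the skeleton planar and the width bounded. By contrast, Step 1 is a citation and Step 3 is a fairly mechanical ``blow up the skeleton and route the edges'' argument; for the bounded-genus torsos one could alternatively invoke \cref{thm:pachtoth2006} directly instead of re-deriving their drawings.
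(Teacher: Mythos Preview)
The paper does not give its own proof of this theorem: it is quoted as background in the introduction and attributed to \citet{WT07}. So there is nothing in the present paper to compare your argument against.

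That said, your outline is essentially the argument Wood and Telle use in \cite{WT07}: they introduce planar decompositions, prove that every $H$-minor-free graph admits one of width bounded by a function of $H$ (via the Robertson--Seymour structure theorem, handling bounded genus, apices, vortices, and clique-sums along a tree in turn), and then convert a width-$w$ planar decomposition into a drawing with $O(w^2\Delta(G)^2\,|G|)$ crossings by the ``blow up nodes to disks, edges to pipes'' construction you describe in Step~3. Your identification of Step~2 (especially the vortex and clique-sum gluing) as the nontrivial part is accurate; that is where most of the work in \cite{WT07} lies. The present paper does reproduce a variant of your Step~3, namely \cref{lemma:hpartitionrcn}, and explicitly credits that drawing technique to \cite{WT07}.
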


 Theorem~\ref{thm:wood2006planar} was improved by Dujmovi\'c et al. \cite{dujmovic2018tight} by reducing the quadratic dependence on $\Delta (G)$ to linear.

	\begin{thm}[\cite{dujmovic2018tight}]\label{vidaold}
		For every graph $H$, there is a constant $c := c(H)$ such that every $H$-minor-free graph $G$ has crossing number
		\[\CR{G} \leq c \cdot \Delta(G) \cdot |G|\]
	\end{thm}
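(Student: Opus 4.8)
The plan is to reduce to Theorem~\ref{thm:pachtoth2006} via the graph minor structure theorem. By the structure theorem of Robertson and Seymour, there is an integer $k=k(H)$ such that every $H$-minor-free graph $G$ is obtained from pieces $G_1,\dots,G_s$ by repeated clique-sums over cliques of size at most $k$, where each $G_i$ is $k$-almost-embeddable: there is a set $A_i$ of at most $k$ apex vertices so that $G_i-A_i$ embeds in a surface of Euler genus at most $k$, apart from at most $k$ vortices of width at most $k$ drawn inside faces of the embedding. (By standard normalisation we may assume $s\le|G|$.) The proof then has two parts: (i) draw a single almost-embeddable piece with $O_H(\Delta\cdot|G_i|)$ crossings, in a controlled way; and (ii) assemble the pieces along the clique-sum tree.

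For part (i): first draw the genus part. The graph $G_i-A_i$ with its vortices removed embeds in a surface of Euler genus at most $k$, so by Theorem~\ref{thm:pachtoth2006} (and since it has $O_H(|G_i|)$ edges) it has a plane drawing with $O_H(\Delta\cdot|G_i|)$ crossings. Next insert each vortex $W$ inside its face: $W$ carries a path-decomposition of width at most $k$ whose bags follow the cyclic order of the interface vertices around that face, so $W$ can be laid out inside the face following this decomposition, contributing $O_H(\Delta\cdot|W|)$ crossings. Finally put the apices back: place each apex vertex in a tiny disk and route its at most $\Delta$ incident edges through the current drawing; since the current drawing has $O_H(|G_i|)$ edges, this can be done at a cost of $O_H(\Delta\cdot|G_i|)$ crossings per apex against the rest of the drawing, plus $O_H(\Delta^2)\le O_H(\Delta\cdot|G_i|)$ crossings among the apex edges themselves. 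Altogether $G_i$ gets a drawing with $O_H(\Delta\cdot|G_i|)$ crossings. Crucially --- and this is the technical heart --- one wants this drawing to be \emph{coherent}: every clique of $G_i$ of size at most $k$ that is identified in a clique-sum bounds a small empty disk of the drawing, all such disks pairwise disjoint, at no extra asymptotic cost.

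For part (ii), root the clique-sum tree and process it from the leaves to the root. Having already built a drawing of each child piece together with its pending subtree --- confined to a closed disk whose boundary carries, in the right cyclic order, the at most $k$ vertices of the clique it shares with $G_i$ --- we take the coherent drawing of $G_i$ from part (i), shrink each such child drawing into the corresponding empty disk of $G_i$, and identify the two copies of the shared clique with $O_H(1)$ new crossings; the resulting drawing of $G_i$ and its subtree is again confined to a disk with the parent clique on its boundary. After processing the root we obtain a drawing of $G$ whose number of crossings is the sum of the part-(i) costs over all pieces, namely $\sum_i O_H(\Delta\cdot|G_i|)=O_H(\Delta\cdot|G|)$ since $\sum_i|G_i|=O_H(|G|)$, plus $O_H(1)$ per clique-sum, namely $O_H(s)=O_H(|G|)$ in total. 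Hence $\CR{G}=O_H(\Delta\cdot|G|)$, which is the claim with $c=c(H)$.

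The main obstacle is exactly the coherence property in part (i). The naive fix --- re-routing a piece's drawing so that a given shared clique comes to lie on the outer face --- can force $\Theta(|G_i|)$ new crossings for a single clique, hence $\Theta(|G|^2)$ over all pieces, which is far too much; so one must produce, for each piece at once, a single drawing exposing all of its $O_H(1)$-sized shared cliques on pairwise disjoint empty disks while keeping the crossing count at $O_H(\Delta\cdot|G_i|)$. This is delicate because a shared clique can straddle the embedded part, a vortex, and the apex set, and because a piece may be the parent of many children simultaneously; it is where the surface structure of the pieces genuinely has to be used. The remaining ingredients --- that $H$-minor-free graphs have $O_H(|G|)$ edges, that bounded-pathwidth vortices admit the layout used above, and the routing of apex edges --- are comparatively routine.
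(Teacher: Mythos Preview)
This theorem is not proved in the present paper at all: it is quoted from \cite{dujmovic2018tight} as background, and the paper builds on it rather than reproving it. So there is no ``paper's own proof'' to compare your attempt against.

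For what it is worth, your outline is the right shape for how \cite{dujmovic2018tight} actually proceeds --- structure theorem, draw each almost-embeddable piece, then glue along the clique-sum tree --- and you have correctly put your finger on the real difficulty, namely arranging that every shared clique of a piece sits on a small empty disk so that children can be inserted. But as written this is a plan, not a proof: you explicitly flag the coherence step as ``the main obstacle'' and ``delicate'' without resolving it, and the vortex and apex steps are also only sketched (e.g.\ the claim that a vortex can be inserted inside its face with $O_H(\Delta\cdot|W|)$ crossings, or that an apex can be routed at cost $O_H(\Delta\cdot|G_i|)$, each needs an actual argument). The present paper's own contribution, incidentally, is precisely a clique-sum gluing lemma (Theorem~\ref{thm:clsrcn}) in the harder rectilinear setting, and the paper remarks that its proof is inspired by the clique-sum handling in \cite{dujmovic2018tight}; but the almost-embeddable case you sketch in part~(i) is not addressed here, since for single-crossing-minor-free graphs only planar and bounded-treewidth pieces arise.
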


In addition, the result in Theorem~\ref{vidaold} was shown to have the best possible dependence of $\Delta(G)$ and $|G|$. These results show that we know very strong, in fact best possible, bounds on the crossing number of all proper minor-closed families of graphs of bounded degree. 

Much less is known for the \rcn. 
 
 F\'ary \cite{istvan1948straight} and Wagner \cite{wagner1936bemerkungen} proved independently that every planar graph has a \rd\ with no crossings. Hence, every planar graph $G$ has the \rcn\ 0, and thus for planar graphs $G$, $\RCR{G} = \CR{G}$.  One may be tempted to conjecture that the rectilinear crossing number and crossing number are tied. However, that is not the case. In particular, Bienstock and Dean \cite{BienstockDaniel1993Bfrc} proved that for every $m$ and every $k \geq 4$, there exists a graph $G$ with $\CR{G}=k$, but $\RCR{G} \geq m$. Therefore, Theorem \ref{thm:wood2006planar} and Theorem \ref{vidaold} do not imply that bounded degree minor-closed families of graphs have linear rectilinear crossing number. 

 In fact, in addition to planar graphs, we are only aware of the following two minor-closed families of bounded degree admitting linear \rcn. The first is the result on $K_{3,3}$-minor-free graphs by \citet{dujmovic2018tight}.
	
	\begin{thm}[\cite{dujmovic2018tight}]
		Every $K_{3,3}$-minor-free graph $G$ has rectilinear crossing number
		\[ \RCR{G} \leq \sum_{v \in V(G)} \deg(v)^2 \leq 2\cdot\Delta(G)\cdot ||G||\in \Oh{\Delta(G)\cdot |G|}\]
	\end{thm}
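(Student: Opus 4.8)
The plan is to build the rectilinear drawing by following Wagner's classical decomposition theorem for $K_{3,3}$-minor-free graphs: every such $G$ arises from planar graphs and copies of $K_5$ by repeated clique-sums over cliques of size at most two. Record such a decomposition as a rooted tree $T$ whose nodes are the pieces---each planar or isomorphic to $K_5$---and whose tree-edges record the clique-sums; call the clique shared by two adjacent pieces their \emph{bond}, a single vertex or a single edge (an actual edge of both incident torsos). After observing that cut vertices just contribute more $1$-sums, so that the pieces may be assumed $2$-connected, I would draw the pieces one at a time along a root-to-leaf traversal of $T$, maintaining the invariant that every drawn piece lies inside a \emph{private region}: an arbitrarily thin triangle having the segment between the two vertices of its bond to its parent as one side, with those two vertices at that side's endpoints (or, if the bond is a single vertex, an arbitrarily small disc touching that vertex), with all private regions pairwise interior-disjoint. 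Each piece is then drawn inside a region that is empty in the current picture except for the bond it touches, so no two pieces cross each other; the only crossings are those forced inside the $K_5$-pieces, exactly one per copy of $K_5$.

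Two geometric facts power the induction. \textbf{(A)} A $2$-connected planar graph $P$ with a prescribed edge $uv$ admits a crossing-free straight-line drawing inside any prescribed triangle with $u,v$ at two of its corners and $uv$ drawn as the side between them---realize the outer cycle through $uv$ as a convex polygon with $u,v$ at its ends and apply Tutte's barycentric drawing to the interior---and the triangle may be taken arbitrarily thin. \textbf{(B)} $K_5$ admits a straight-line drawing with exactly one crossing inside any prescribed triangle, with two prescribed vertices $u,v$ at two corners and $uv$ the side between them: since $K_5$ with one edge $xy$ deleted is a planar triangulation, draw $K_5-xy$ as in \textbf{(A)} with $uv$ on the outer face while placing $x$ and $y$ in two faces that share an edge; reinserting $xy$ as a straight segment through that shared edge produces a single crossing in the interior (an explicit five-point coordinate choice realizes this and survives an arbitrary flattening toward the base). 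Finally---this is the heart of the inductive step---once $P$ has been drawn inside its private region, every bond of $P$ with a child of $P$ in $T$ is an edge or a vertex of $P$'s straight-line drawing and hence borders faces of positive area \emph{inside} $P$'s private region; after a generic perturbation of $P$'s drawing we carve, inside those faces, pairwise interior-disjoint thin triangles and discs to serve as the children's private regions (staying clear of the single crossing point when $P\cong K_5$), which re-establishes the invariant.

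With this drawing in hand, its number of crossings equals the number of $K_5$-pieces of the decomposition, and this is $O(|G|)$: the leaves of $T$ and the nodes of $T$ of degree at least three number $O(|G|)$ by a private-vertex-and-leaf counting argument, and a $K_5$-piece occupying a node of $T$ of degree at most two also owns a private vertex. Hence $\RCR{G}=O(|G|)$; the stated (weaker but cleaner) bound $\sum_{v\in V(G)}\deg(v)^2$ follows since that quantity is $\Omega(|G|)$ once isolated vertices are discarded, while $\sum_{v}\deg(v)^2\le 2\Delta(G)\cdot\E{G}$ and $\E{G}=O(|G|)$ for $K_{3,3}$-minor-free graphs by \citet{K84} and \citet{T84}, which also yields the $\Oh{\Delta(G)\cdot |G|}$ form.

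The step I expect to fight with is the disjointness bookkeeping of the inductive step---and getting \textbf{(B)} in exactly this ``thin triangle with a prescribed side'' form. When several children hang off a common bond, or off two bonds sharing a vertex, their private regions must be nested and perturbed so as to stay pairwise interior-disjoint while still leaving each bond vertex in a rotational order around it that is consistent with the children's own drawings. Matching the precise constant $\sum_v\deg(v)^2$, as opposed to the qualitative $O(|G|)$, is likewise a matter of care in the final charging; but the asymptotic $\Oh{\Delta(G)\cdot |G|}$ bound falls out at once from ``one crossing per $K_5$-piece''.
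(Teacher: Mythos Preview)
This statement is not proved in the paper; it is quoted from \cite{dujmovic2018tight} as prior work. The paper's own main result (Theorem~\ref{thm:scgrcnmain}) does recover the asymptotic $O(\Delta(G)\cdot|G|)$ bound as the special case $X=K_{3,3}$, but by a different and heavier route: Robertson--Seymour's single-crossing structure theorem (Theorem~\ref{thm:robertson2003}), the $(\leq 3)$-simplicial-blowup machinery, and the clique-sum combination lemma (Theorem~\ref{thm:clsrcn}). That route yields a constant of the form $3(t^2+2t+2)$ rather than the sharp $\sum_v\deg(v)^2$.

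Your approach is correct in outline and is essentially the direct argument of \cite{dujmovic2018tight}: Wagner's $(\leq 2)$-clique-sum decomposition of $K_{3,3}$-minor-free graphs into planar pieces and copies of $K_5$, together with a nested ``private region'' drawing scheme that realises each $K_5$-piece with exactly one crossing and each planar piece with none. This is more elementary than the paper's general framework and gives the better constant. The two points you flag as delicate are indeed where the work sits but are routine: affine images of rectilinear drawings are rectilinear, so flattening a piece into an arbitrarily thin triangle is free; in (B) the deleted edge $xy$ may be chosen disjoint from $\{u,v\}$, so the base side is uncrossed; and when several children hang off a common bond one simply nests their thin triangles inside a single face incident to that bond. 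One simplification of your final counting: every non-root piece contributes at least $|P|-2$ vertices not in its parent bond, so each $K_5$-piece contributes at least three such vertices and there are at most $|G|/3+1$ of them. After discarding isolated vertices this is already at most $\sum_v\deg(v)^2$, so the exact stated constant follows without any additional charging.
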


The second is a result on the convex crossing number\footnote{Rectilinear drawings where vertices are required to be in convex positions are called \emph{convex drawings}. For a graph $G$, the minimum number of crossing over all convex drawings of $G$ is called \emph{convex crossing number} of $G$ and is denoted by $\textup{cr}^*(G)$. Clearly, for every $G$, $\CR{G}\leq  \RCR{G} \leq \textup{cr}^*(G)$.} of bounded treewidth graphs by \citet{WT07}.

	\begin{thm}[\cite{WT07}]\label{wood2007}
		Every  graph $G$ of treewidth $k$ has convex crossing number
		\[ \textup{cr}^*(G) \in \Oh{k^2\cdot\Delta(G)^2\cdot ||G||}\in \Oh{k^3\cdot\Delta(G)^2\cdot |G|}\]
	\end{thm}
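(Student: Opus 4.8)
The plan is to prove the theorem by routing the convex drawing through a \emph{tree-partition} of $G$. Recall that a tree-partition of a graph $G$ is a tree $T$ together with a partition $\{V_x : x\in V(T)\}$ of $V(G)$ such that every edge of $G$ either has both endpoints in a common part $V_x$, or has its endpoints in parts $V_x,V_y$ with $xy\in E(T)$; its \emph{width} is $\max_{x}|V_x|$. I would invoke the known fact that every graph of treewidth $k$ and maximum degree $\Delta$ admits a tree-partition of width $w\in\Oh{k\cdot\Delta}$ (Ding and Oporowski; see also Wood). It then suffices to show that any graph $G$ admitting a tree-partition of width $w$ has a convex drawing with at most $w\cdot\Delta(G)\cdot\E{G}$ crossings: substituting $w\in\Oh{k\Delta}$ and using $\E{G}\in\Oh{k\V{G}}$ for treewidth-$k$ graphs yields $\textup{cr}^*(G)\in\Oh{k\cdot\Delta^2\cdot\E{G}}\subseteq\Oh{k^2\cdot\Delta^2\cdot\V{G}}$, which is in fact slightly stronger than the stated bound.

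So, given a tree-partition $(T,\{V_x\})$ of $G$ of width $w$, I would root $T$ at an arbitrary node, take a depth-first pre-order $x_1,\dots,x_m$ of $V(T)$, and define the vertex ordering $\sigma$ by concatenating the parts $V_{x_1},V_{x_2},\dots,V_{x_m}$ (internal order within each part arbitrary). The drawing places the vertices on a circle in the cyclic order $\sigma$ with each edge a chord. The key claim is that every edge of $G$ is crossed by at most $2w\Delta$ other edges; summing over all edges and dividing by $2$ then gives the bound $w\Delta\E{G}$. To prove the claim, take an edge $e=uv$. If $u$ and $v$ lie in a common part $V_x$, the positions of $\sigma$ strictly between $u$ and $v$ lie inside the (contiguous) block of $V_x$, so every edge crossing $e$ has an endpoint in $V_x$, and there are at most $|V_x|\Delta\le w\Delta$ such edges. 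Otherwise $u\in V_x$ and $v\in V_y$ with $xy\in E(T)$, say $y$ a child of $x$; since a pre-order makes each subtree of $T$ occupy a contiguous block of $\sigma$, every position strictly between $u$ and $v$ lies in $V_x$, in $V_y$, or in the subtree rooted at some child of $x$ that precedes $y$. A short case analysis, using that tree-partition edges join a part only to itself or to a $T$-neighbour and that $v$ precedes every proper descendant of $y$, shows that an edge with exactly one endpoint strictly between $u$ and $v$ (the only edges that can cross $e$) is necessarily incident with $V_x$ or with $V_y$; hence $e$ is crossed by at most $|V_x|\Delta+|V_y|\Delta\le 2w\Delta$ edges.

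The step I expect to be the main obstacle is exactly that case analysis in the second case: one must verify that an edge whose two endpoints both fall strictly between $u$ and $v$ does not cross $e$, and that the only edges "straddling" the interval from $u$ to $v$ are those attached to $V_x$ or to $V_y$ --- this uses both the tree-partition adjacency restriction and the precise way pre-order interacts with subtree contiguity. The sole external ingredient is the tree-partition width bound $\Oh{k\Delta}$, which is where the factor $k$ and one factor of $\Delta$ enter. Finally, it seems worth remarking why the more obvious approach --- recursively splitting $G$ along a balanced separator of size at most $k+1$ and concatenating the recursive orderings --- does not work cleanly: although each separator is incident with only $\Oh{k\Delta}$ edges, an edge incident with a separator deep in the recursion can be "passed over" by $\Theta(\E{G})$ edges coming from shallower levels, and summing such contributions over the $\Theta(\log\V{G})$ recursion levels introduces a spurious logarithmic factor. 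Routing through a tree-partition avoids this, because there the per-edge crossing count is controlled locally by just two bags.
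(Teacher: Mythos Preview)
The paper does not supply its own proof of this statement --- it is quoted from Wood and Telle~\cite{WT07} as prior work, so there is nothing in-paper to compare your attempt against. Your approach is in fact the original Wood--Telle argument: take a tree-partition of width $w\in\Oh{k\Delta}$, lay the bags around a circle in a depth-first pre-order of the tree, and show that each edge is crossed at most $2w\Delta$ times. The argument is correct; the case analysis you flag as the main obstacle goes through cleanly, because the subtree rooted at any child of $x$ preceding $y$ occupies a contiguous interval lying entirely on the arc from $u$ to $v$, and a tree-partition edge with one endpoint in such a subtree and the other outside it must have that other endpoint in $V_x$. Your observation that this already yields $\Oh{k\,\Delta^2\,\E{G}}\subseteq\Oh{k^2\,\Delta^2\,\V{G}}$, a factor of $k$ better than the bound quoted, is also right; the paper is simply citing a (possibly loosened) form of the Wood--Telle result, which is immaterial for its purposes here.
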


 In the case of the rectilinear crossing number a stronger bound is
 known but still with the quadratic dependence on $\Delta(G)$ in the worst case.

\begin{thm}[\cite{dujmovic2018tight}]\label{tw-rec}
  Every  graph $G$ of treewidth $k$ has \rcn
  	\[ \RCR{G} \in O(k\cdot\Delta(G)\cdot \sum_{v \in
            V(G)}\deg(v)^2) \in \Oh{k\cdot\Delta(G)^2\cdot |G|}\]
	\end{thm}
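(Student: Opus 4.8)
The plan is to build the drawing piece by piece along a tree decomposition of $G$, just as $G$ itself is assembled from its pieces by clique-sums; this is a straight-line incarnation of clique-summing, which is exactly the machinery the paper develops in general further on. First I would fix a tree decomposition $(T,\{B_x\}_{x\in V(T)})$ of $G$ of width $k$ and, after standard cleanup, assume $T$ is rooted and binary with $\Oh{\V{G}}$ nodes. For $v\in V(G)$ let $\mathrm{top}(v)$ be the node of $T$ nearest the root whose bag contains $v$, and put $N_x:=\{v:\mathrm{top}(v)=x\}$, so $V(G)=\bigcup_x N_x$ with $|N_x|\le|B_x|\le k+1$. A short argument shows that every edge $uv$ of $G$ lies inside $B_x$ for a unique node $x$ — its \emph{host} — and that at least one endpoint of $uv$ lies in $N_x$; so every edge is assigned to a single node, in whose bag both endpoints lie and one of which is introduced there.

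The drawing is produced bottom-up. For a node $x$, let $G_x$ be the subgraph induced by the union of the bags in the subtree rooted at $x$, and let the \emph{interface} $I_x:=\{v\in B_x : v\text{ occurs in some bag outside the subtree of }x\}$; then $|I_x|\le k+1$ and $I_x$ carries every edge of $G$ with one endpoint in $G_x$ and the other outside. I would maintain the invariant that $G_x$ is drawn with straight segments inside a convex region $R_x$ so that (i) the vertices of $I_x$ lie on $\partial R_x$ in an order chosen during the construction, (ii) every other vertex and every edge interior of $G_x$ lies strictly inside $R_x$, and (iii) each edge of $G$ that leaves $G_x$ at a vertex of $I_x$ is present only as a short \emph{stub} poking out through $\partial R_x$. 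Leaf nodes are placed by hand ($|B_x|\le k+1$). At an internal node $x$ with children $x_1,x_2$, I would shrink the two sub-drawings, place $R_{x_1}$ and $R_{x_2}$ inside $R_x$ with their interfaces pressed against $\partial R_x$, drop the vertices of $N_x$ into the leftover free space, install $I_x$ on $\partial R_x$ in a convenient order, draw — as straight segments in the free space of $R_x$ — every edge hosted at $x$ (each joining two of the at most $k+1$ now boundary-accessible vertices of $B_x$, some of them produced by splicing a stub of $R_{x_1}$ onto a stub of $R_{x_2}$), and extend the remaining stubs of $R_{x_1}$ and $R_{x_2}$ outward through $\partial R_x$. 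Pressing the sub-regions against the boundary with their interfaces exposed is what keeps the newly drawn segments in the free space of $R_x$ instead of forcing them deep into $R_{x_1}$ or $R_{x_2}$, so the only crossings created at $x$ are among the $\Oh{k\Delta}$ segments that come near $\partial R_x$.

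Summing over the $\Oh{\V{G}}$ nodes, and charging each crossing created at $x$ to the pair of vertices of $B_x$ whose incident segments meet there — a vertex contributing at most $\deg(v)$ such segments near any one region — gives a bound of the form $\Oh{k\,\Delta\sum_{v\in V(G)}\deg(v)^2}$, and then $\sum_v\deg(v)^2\le 2\Delta\E{G}=\Oh{k\,\Delta\,\V{G}}$ for treewidth-$k$ graphs yields the stated $\Oh{k\,\Delta^2\,\V{G}}$. I expect the real difficulty to be geometric, not this bookkeeping: one has to keep the chosen boundary order of an interface vertex consistent across all of the (possibly many) pieces that contain it, and — the crucial point — realise every inter-region edge as a genuine straight segment, collinear with the boundary points it passes through, rather than the polygonal curve a purely topological argument would produce. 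A straight chord of $R_x$ may be forced to cut across a neighbouring sub-region, and keeping the number of extra crossings it picks up there down to $\Oh{\Delta}$ is what makes that sub-region's drawing have to be placed so that its short side faces outward; this is where the $\Delta$-dependence of the bound comes from. It is exactly this straightening that rules out simply invoking the generic $\RCR{G}\in\Oh{(\CR{G}+\V{G})\log\V{G}}$ bound, and the bounded width of the pieces — each region $R_x$ ever holding only $\Oh{k\Delta}$ objects — is what lets the configuration be straightened with no extra logarithmic factor.
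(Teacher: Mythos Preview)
This theorem is quoted from \cite{dujmovic2018tight}; the present paper does not supply a proof of it. What the paper \emph{does} prove is the sharper bound of \cref{thm:twrcn}, $\RCR{G}\le k(k+2)\,\Delta(G)\,\E{G}$, obtained by an entirely different route: one observes that a treewidth-$k$ graph is a $(\le k)$-clique-sum of copies of $K_{k+1}$, checks that $K_{k+1}$ is $(k,k)$-agreeable (\cref{corollary:completesbrcn}), and then invokes the general clique-sum machinery of \cref{thm:clsrcn}. That machinery is a \emph{top-down} construction: each piece is drawn with a single dummy vertex standing in for each child subtree, and \cref{lemma:diskrcn} is used to replace the dummy by a tiny disk into which the child's drawing is dropped; the isthmus edges are then redrawn as straight segments from their fixed outer endpoint to the correct point inside the disk, and the lemma guarantees this does not create uncontrolled crossings.

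Your plan is a bottom-up version of the same induction, but it leaves the decisive step unresolved. In your scheme an interface vertex $v$ with $\mathrm{top}(v)=x$ sits on the boundary of every region $R_z$ along the path from $x$ down to the deepest bag containing $v$, and an edge $uv$ hosted far below $x$ is first drawn to a boundary placeholder and then ``extended'' level by level; you yourself note that this produces a polygonal curve unless all the boundary points it passes through are made collinear. You do not say how to enforce that collinearity, and it is genuinely hard to do directly: a vertex $v$ may sit in the interface of many nested regions simultaneously, each imposing its own boundary-order constraint, and the straight segment from $u$ to the eventual position of $v$ will in general plunge through several of those nested regions regardless of how their ``short sides'' are oriented. (Incidentally, your ``splicing a stub of $R_{x_1}$ onto a stub of $R_{x_2}$'' cannot occur: if $uv\in E(G)$ then the bags containing $u$ and those containing $v$ are subtrees that meet, so $\mathrm{top}(u)$ and $\mathrm{top}(v)$ are comparable and no edge runs between two distinct child subtrees.) The paper's disk-replacement trick is precisely what dissolves this difficulty: rather than fixing the inner endpoint first and trying to thread a straight line out, one fixes the \emph{outer} endpoint, contracts the whole child drawing to a point, records the resulting crossings, and only then expands the point to a disk small enough that redirecting each edge to its true inner endpoint changes nothing outside the disk. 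Your bookkeeping and charging scheme are fine; what is missing is exactly this geometric mechanism, and without it the proposal is a plan rather than a proof.
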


Our goal in this article is to extend this result to much wider minor-closed families of graphs of  bounded degree, in particular, a family of graphs that exclude a single-crossing graph as a minor. 
 \emph{A single-crossing graph} is a graph whose crossing number is at most one. Single-crossing minor-free graphs have been studied by the algorithms community \cite{DBLP:journals/siamcomp/EppsteinV21,DBLP:journals/algorithmica/DemaineHT05,DBLP:journals/jgaa/ChambersE13} where at times these results were precursors to algorithms and techniques applicable to more general minor-closed classes \cite{DBLP:journals/algorithmica/DemaineHT05, DBLP:journals/jacm/DemaineFHT05}.  $K_{3,3}$, $K_5$ and every planar graph are examples of single-crossing graphs. Note however that a minor of a single-crossing graph is not necessarily a single-crossing graph itself (see \cite{DBLP:journals/algorithmica/DemaineHT05} for easy examples). Note finally that a graph excluding a single-crossing graph as a minor may have arbitrarily large crossing number.  For example, any $n$-vertex graph $G$, composed of disjoint union of $\floor{\frac{n}{6}}$ copies of $K_{3,3}$, excludes $K_5$ as a minor ($K_5$ is a single-crossing graph) and yet the crossing number of $G$ is $\Theta(n)$. 

The following theorem is our main result.

	\begin{thm}\label{thm:scgrcnmainG}
		Let $X$ be a single-crossing graph. There exists a constant $c := c(X)$, such that every $X$-minor-free graph $G$ has the rectilinear crossing number of at most $c \cdot\Delta (G) \cdot |G|$.
	\end{thm}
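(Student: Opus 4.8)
The plan is to reduce the theorem to drawing constant-size graphs and gluing the resulting drawings along clique-sums. The starting point is the structural decomposition of single-crossing-minor-free graphs (Robertson and Seymour; see \cite{DBLP:journals/algorithmica/DemaineHT05}): for a single-crossing graph $X$ there is a constant $k=k(X)$ such that every $X$-minor-free graph is obtained by clique-sums of order at most $3$ from graphs that are planar or have treewidth at most $k$. Since a graph of treewidth at most $k$ is a spanning subgraph of a chordal graph with clique number at most $k+1$, and a chordal graph is a clique-sum of its (bounded-size) maximal cliques along a clique-tree, we may push the decomposition one step further: every $X$-minor-free graph $G$ is built by clique-sums of bounded order (at most $\max\{3,k\}$) from pieces each of which is either planar or has at most $k+1$ vertices. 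Such a decomposition can be taken to be reduced, so that its pieces form a rooted tree with $O(|G|)$ pieces and $\sum_t|V(G_t)|=O(|G|)$, each non-root piece $G_t$ attached to its parent along a clique $C_t$; and, by splitting a planar piece along a separating clique when necessary, we may also assume $C_t$ bounds a face in some plane embedding of its parent whenever that parent is planar.

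The technical heart is a lemma realising a clique-sum inside a straight-line drawing. Roughly: if $A$ and $B$ share a clique $C$ with $|C|\leq k+1$, and we are given rectilinear drawings $D_A$ of $A$ and $D_B$ of $B$, then $A\oplus_C B$ (the clique-sum of $A$ and $B$ along $C$) has a rectilinear drawing with at most $(\text{crossings of }D_A)+(\text{crossings of }D_B)+O\!\left(\sum_{v\in C}\deg_{A\oplus_C B}(v)\right)$ crossings, and moreover this drawing again exposes as empty faces (or small empty regions) all cliques along which later sums will be made, so the lemma can be iterated bottom-up along the decomposition tree. The construction shrinks $D_B$ into a small disk placed next to one vertex $p_1\in C$ inside a face of $D_A$ incident to $p_1$; the vertices of $C$ are slid onto their $D_A$-positions, the edges of $B$ at $p_1$ stay inside that face, and for each other $p_i\in C$ the edges of $B$ at $p_i$ are routed as a thin parallel bundle hugging the segment $p_1p_i$ (an edge of $A$, since $C$ is a clique). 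Such a bundle crosses only what $p_1p_i$ crosses in $D_A$ --- nothing, if $D_A$ is planar --- plus $O(1)$ edges per bundle member near the two endpoints; choosing the cyclic order in which the bundles enter each vertex of $C$ so as to be compatible with $D_A$ and $D_B$ keeps the overhead linear, rather than quadratic, in the degrees of the vertices of $C$. If $C_t$ fails to be facial in its parent --- which here can only happen at a bounded-size parent --- the same routing is used inside an arbitrary face incident to $p_1$, still at cost $O_k\!\left(\sum_{v\in C}\deg(v)\right)$ because the bundles hug clique edges, each carrying only $O_k(1)$ crossings.

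With the lemma in hand, process the decomposition tree bottom-up. A leaf that is planar is drawn with no crossings by Fáry's theorem \cite{istvan1948straight}, in a plane embedding in which the clique toward its parent is facial; a leaf on at most $k+1$ vertices is drawn arbitrarily, with at most $\binom{k+1}{2}^2=O_k(1)$ crossings. Merging a processed subtree into its parent via the gluing lemma adds $O_k(1)$ crossings if the parent is small and $O\!\left(\sum_{v\in C_t}\deg_G(v)\right)=O_k(\Delta(G))$ crossings otherwise. Summing over the $O(|G|)$ pieces and $O(|G|)$ clique-sums, the final drawing of $G$ has $O_k(|G|)+\sum_t O\!\left(\sum_{v\in C_t}\deg_G(v)\right)$ crossings; since the cliques $C_t$ have bounded size this is $O(\Delta(G)\cdot|G|)$ (more precisely, arranging the decomposition so that each vertex $v$ lies in $O(\deg_G(v))$ of the $C_t$ gives the sharper $O\!\left(\sum_v\deg_G(v)^2\right)$, which is $O(\Delta(G)\cdot|G|)$ for $X$-minor-free $G$ because such graphs have $O(|G|)$ edges). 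Hence $\RCR{G}\in O(\Delta(G)\cdot|G|)$ with the constant depending only on $X$, as required; specialising to treewidth-$k$ graphs (whose chordal completion is already a clique-sum of $(\leq k+1)$-cliques) gives the claimed improvement of Theorem~\ref{tw-rec}.

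The main obstacle is the gluing lemma, and within it the requirement that the per-sum overhead be linear, not quadratic, in $\Delta$. Because straight-line edges cannot be rerouted, all edges of the attached piece leaving one vertex of $C$ are confined to essentially a single narrow cone, where a naive placement forces $\Theta(\Delta^2)$ mutual crossings; this is precisely where the earlier $O(\Delta^2)$-type bounds of \cite{WT07,dujmovic2018tight} for bounded treewidth lose a factor, and removing it needs the bundled routing along clique edges together with a deliberate choice of rotation system at the clique vertices, while simultaneously maintaining the invariant that all future summing cliques remain facial or accessible after each sum. The remainder is bookkeeping: extracting a reduced, linear-size decomposition; checking that augmenting a planar piece with the (already present in that piece) clique edges needed to make $C_t$ facial changes neither $\Delta$ nor the excluded minor; and the small case analysis over $|C_t|\in\{1,2,3\}$, and up to $k+1$ for the chordal pieces.
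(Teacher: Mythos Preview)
Your high-level plan matches the paper's: invoke the Robertson--Seymour structure theorem for single-crossing-minor-free graphs, decompose bounded-treewidth pieces further into bounded-size cliques, draw the pieces, and glue along the clique-sums. The divergence is entirely in the gluing step, and there your lemma as stated has a real gap.

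You claim that gluing $B$ (the already-drawn subtree) into $A$ along $C$ costs $\CR{D_A}+\CR{D_B}+O\bigl(\sum_{v\in C}\deg(v)\bigr)$. But when you slide $p_i$ (for $i\geq 2$) from its shrunk-$D_B$ position to its far-away $D_A$ position, every edge $p_i u$ with $u\in V(B)\setminus C$ becomes a straight segment from outside the disk to $u$ inside it, and \emph{inside the disk} this segment can cross arbitrarily many edges of $D_B$ --- crossings that are neither the old crossings of $D_B$ nor bounded by $\deg(p_i)$. Your ``plus $O(1)$ edges per bundle member near the two endpoints'' ignores exactly this. A per-edge charging (each edge of $B$ is crossed by at most $k\Delta$ bundle edges entering its disk) can rescue the global count, but then the overhead is not $O(k\Delta)$ per glue; one must argue globally, and your summation $\sum_t O\bigl(\sum_{v\in C_t}\deg_G(v)\bigr)$ no longer captures it. Your proposed fix, ``choosing the cyclic order in which the bundles enter each vertex of $C$'', is not available in the rectilinear setting: the rotation at a vertex is forced by the point positions, not a parameter you can set. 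The same problem bites when several children attach to one small parent --- the bundle--bundle crossings there are not $O_k(1)$ as you write, but can be $\Theta(\Delta^2)$ without further control.

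The paper's device for exactly this obstacle is the \emph{simplicial blowup}: rather than drawing a piece and gluing afterward, one draws, for each piece $G_i^-$, a multigraph $Q_i^-$ in which every child subtree is pre-represented by a single dummy vertex carrying parallel edges to the (at most $k$) clique vertices, with the correct multiplicities. An optimal rectilinear drawing of $Q_i^-$ then \emph{already} routes and accounts for all bundle--bundle and bundle--piece interactions; the gluing step reduces to replacing one dummy point by a tiny disk (Lemma~\ref{lemma:diskrcn}), a local perturbation whose extra crossings can be charged globally at $\leq 2k\Delta(G)$ per edge of $G$. This shifts the burden from the gluing lemma to showing that simplicial blowups of the pieces have $O(\Delta\cdot\|\cdot\|)$ \rcn\ --- which the paper does via $H$-partitions for planar pieces without separating triangles (Lemma~\ref{lemma:separatingtrianglesrcn}) and, recursively, for cliques (Corollary~\ref{corollary:completesbrcn} and Theorem~\ref{thm:twrcn}). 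Your sketch does not provide an analogue of this preparation step, and without it the straight-line gluing does not give the linear-in-$\Delta$ overhead you need.
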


The dependence on $\Delta$ and $|G|$ in the above theorem is best
possible. A standard lower bound constructions implies it (see for example
\cite{DBLP:journals/ijfcs/BorozkyPT06, WT07}).
 Specifically, consider a modification of the example above where this
 time the graph is comprised of the disjoint union of $\Omega(n/\Delta)$
 copies of $K_{3,3}$ where each $K_{3,3}$ is transformed into a
 $\Delta$-regular graph by adding $\Omega(\Delta)$ paths of length two
 between every pair of vertices in each copy of $K_{3,3}$. The
 resulting graph has maximum degree $\Delta$ and is still
 $K_5$-minor-free (and thus single-crossing minor-free) and yet has crossing number $\Omega(\Delta\cdot n)$. That graph also has
 treewidth at most $5$. Thus the following two corollaries are
 both tight. 

Since $K_5$ is a single-crossing graph, the following result is an immediate corollary of Theorem \ref{thm:scgrcnmainG}.

\begin{cor}\label{corK5}
There exists a constant $c$ such that every $K_5$-minor-free graph $G$ has a rectilinear crossing number of at most $c \cdot\Delta (G)\cdot |G|$.  
\end{cor}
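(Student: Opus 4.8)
The statement \cref{corK5} is the special case $X=K_5$ of \cref{thm:scgrcnmainG}, since $K_5$ has crossing number exactly one and is therefore a single-crossing graph; so it suffices to describe how I would prove \cref{thm:scgrcnmainG}. The plan is to combine the classical structure theorem for graphs excluding a single-crossing graph as a minor with a ``straight-line clique-sum'' construction. Recall that for every single-crossing graph $X$ there is a constant $k=k(X)$ such that every $X$-minor-free graph is obtained by repeatedly taking clique-sums of order at most $3$ of graphs that are planar or have treewidth at most $k$ (Robertson and Seymour; see also \cite{DBLP:journals/algorithmica/DemaineHT05}). Fix such a decomposition and organise it by a rooted tree $\mathcal{T}$ whose nodes are the \emph{pieces} (the torsos), with consecutive pieces sharing a clique of size at most $3$. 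For the accounting I would use the standard facts that the total size of all pieces is $\Oh{|G|}$, that there are $\Oh{|G|}$ of them, and that every piece has maximum degree $\Oh{\Delta(G)}$.

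The core of the argument is a drawing lemma that lets the pieces be assembled without interfering: every piece $H$ with a distinguished attachment clique $C$, $|C|\le 3$, has a rectilinear drawing such that (i) $C$ is drawn as the corners of a reference triangle $T$ (a segment if $|C|=2$, a single point if $|C|=1$) with all of $H-C$ strictly inside $T$; (ii) at each corner of $T$ the edges of $H$ leave into an arbitrarily narrow cone; (iii) every other attachment clique of $H$ is itself drawn as an empty triangle inside $T$; and (iv) the number of crossings is $\Oh{\Delta(H)\cdot|H|}$. For a planar piece one gets (iv) with zero crossings: when $C$ is a face, Tutte's spring embedding (or F\'ary's and Wagner's theorems, \citet{istvan1948straight,wagner1936bemerkungen}) gives a straight-line drawing with $C$ as the outer triangle and everything strictly inside, while a non-facial summing triangle can be accommodated at an extra cost of $\Oh{\Delta}$ per summing vertex by a local rerouting. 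For a bounded-treewidth piece, (iv) needs a new bound: that a graph of treewidth at most $k$ has a rectilinear drawing inside a prescribed triangle with $\Oh{k\cdot\Delta\cdot|H|}$ crossings, together with the boundary control of (i)--(iii). This strengthens \cref{tw-rec} from quadratic to linear dependence on $\Delta$, and I would prove it by an incremental/recursive construction driven by the tree-decomposition (peeling simplicial vertices from a $k$-tree supergraph, or recursing on balanced separators of size $\Oh{k}$), charging each new crossing to one of its endpoints so that a vertex $v$ is charged $\Oh{k\cdot\deg(v)}$ in total; avoiding the extra logarithmic factor that a naive separator recursion produces is the delicate point.

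Granting the drawing lemma, the assembly is the ``clique-sums in a straight-line drawing setting'' advertised in the abstract. Process $\mathcal{T}$ from the leaves up, maintaining the invariant that every still-active attachment clique of the part drawn so far is realised as an empty triangle with the narrow-cone property. To attach a child piece $H$ along its clique $C$, use that there is a unique affine map carrying the reference triangle of $H$ onto the (possibly very thin) empty triangle currently realising $C$, and that affine maps preserve straight segments, incidences, and the number of crossings; apply this map to the reference drawing of $H$ and drop the image into the empty wedge that $C$ leaves in the current drawing. Since that wedge is empty and $H$'s image lies entirely within it, no crossing between $H$ and the rest is created, the rotation system stays planar at each vertex of $C$, and by (iii) the active attachment cliques of $H$ reappear as empty triangles, so the invariant is restored. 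Hence each attachment contributes only the $\Oh{\Delta(H)\cdot|H|}$ crossings internal to $H$'s reference drawing, plus the $\Oh{\Delta}$ slack for a non-facial triangle; summing over all pieces and using that their sizes total $\Oh{|G|}$, that there are $\Oh{|G|}$ attachments, and that $\Delta(H)=\Oh{\Delta(G)}$ yields $\RCR{G}=\Oh{\Delta(G)\cdot|G|}$, as required.

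I expect the genuine obstacle to be making clique-sums work with rigid segments. Topologically one would simply shrink a subdrawing and insert it into a face; with straight lines one must commit to affine placements, and this forces the reference drawing of every piece to simultaneously realise \emph{all} of its summing cliques as empty faces with free cones. Arranging this for planar pieces in the presence of non-facial summing triangles, and producing bounded-treewidth reference drawings meeting (i)--(iv) at once with only linear dependence on $\Delta$, are the two places where I would expect the real work to lie.
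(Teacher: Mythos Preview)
Your reduction of \cref{corK5} to \cref{thm:scgrcnmainG} is exactly the paper's one-line argument: $K_5$ has crossing number one, hence is a single-crossing graph, hence the corollary is a special case. Nothing more is needed for \cref{corK5} itself.

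Your sketch of \cref{thm:scgrcnmainG}, however, diverges from the paper and has real gaps. The paper does \emph{not} draw the pieces $G_i$ directly and glue by affine maps into empty triangles. Instead it introduces, for each piece, a \emph{$(\le k)$-simplicial blowup} $Q_i^-$ of $G_i^- := G_i - P_i$: one dummy vertex $c_j$ per child, with as many parallel edges to each $v\in P_j\cap V(G_i^-)$ as there are edges of $G$ from $v$ into the subtree below $j$. It then proves (Claim~1) that $\Delta(Q_i^-)\le k\,\Delta(G)$, draws each $Q_i^-$ with $\le c\,k\,\Delta(G)\,\|Q_i^-\|$ crossings, and assembles by replacing each dummy vertex with a small disk (Lemma~\ref{lemma:diskrcn}) containing the child's drawing. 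Gluing is \emph{not} crossing-free: the proof carefully charges up to $2k\,\Delta(G)$ new crossings to each edge.

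Two of your ``standard facts'' fail and are precisely what the blowup machinery is there to fix. First, it is \emph{not} true in general that each piece has maximum degree $O(\Delta(G))$: clique-sums may delete clique edges, so a vertex can have many virtual neighbours in a piece while having small degree in $G$; the paper sidesteps this by working with $Q_i^-$, whose non-dummy vertices have degree $\le\Delta(G)$ because every edge of $Q_i^-$ corresponds to an edge of $G$. Second, several children may attach at the \emph{same} clique $\{u,v,w\}$; your affine-map scheme fills the entire triangle with one child, and any subdivision of that triangle destroys the required identification of all three corners with $u,v,w$. The paper's dummy vertices $c_j,c_{j'},\dots$ living in the same face handle this cleanly. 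Finally, for bounded-treewidth pieces your requirement that \emph{every} attachment clique be simultaneously realised as an empty triangle with free cones is not established; the paper avoids this by proving that bounded-treewidth graphs are $(k,\,k(k{+}2))$-agreeable (Lemma~\ref{lemma:twsbrcn}) via \cref{thm:clsrcn} applied recursively, rather than by producing a single monolithic drawing with prescribed empty faces.
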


It is known that the family of graphs of treewidth at most $k$ excludes a planar grid of size $k^c$  as a minor (for some constant $c$) \cite{ROBERTSON198692}. Since every planar graph is a single-crossing graph,  Theorem \ref{thm:scgrcnmainG} implies the following result.

\begin{cor}\label{corTW}
For every integer $k>0$, there exists a constant $c_k$ such that every graph $G$ of treewidth at most $k$ has a rectilinear crossing number at most $c_k \cdot\Delta (G)\cdot |G|$.  
\end{cor}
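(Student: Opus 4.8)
The plan is to obtain this directly from Theorem~\ref{thm:scgrcnmainG} by exhibiting, for each fixed $k$, a single fixed single-crossing graph that is excluded as a minor from every graph of treewidth at most $k$. Fix $k>0$ and let $\Gamma_k$ denote the planar grid of size $k^c\times k^c$, where $c$ is the absolute constant for which the family of graphs of treewidth at most $k$ excludes $\Gamma_k$ as a minor \cite{ROBERTSON198692}. (In fact, only the elementary direction of that statement is needed: a $t\times t$ grid has treewidth $t$, so any graph containing it as a minor has treewidth at least $t$; hence any grid whose treewidth exceeds $k$ is already excluded.) Consequently every graph $G$ with $\tw(G)\le k$ is $\Gamma_k$-minor-free.

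Observe next that $\Gamma_k$ is planar, so $\CR{\Gamma_k}=0\le 1$, and therefore $\Gamma_k$ is a single-crossing graph in the sense of the paper. Applying Theorem~\ref{thm:scgrcnmainG} with $X:=\Gamma_k$ yields a constant $c(\Gamma_k)$ such that every $\Gamma_k$-minor-free graph $G$ satisfies $\RCR{G}\le c(\Gamma_k)\cdot\Delta(G)\cdot|G|$. Since $\Gamma_k$ depends only on $k$, so does $c(\Gamma_k)$; setting $c_k:=c(\Gamma_k)$ and combining with the previous paragraph gives $\RCR{G}\le c_k\cdot\Delta(G)\cdot|G|$ for every graph $G$ of treewidth at most $k$, as claimed.

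There is essentially no obstacle here beyond correctly matching the hypotheses of Theorem~\ref{thm:scgrcnmainG}: one must check that the excluded graph can be chosen to depend on $k$ alone (so that the resulting constant is a function of $k$ alone) and that it is genuinely single-crossing, both of which are immediate once a grid of treewidth larger than $k$ is used as the excluded minor. The entire content of the corollary is thereby transferred to the main theorem.
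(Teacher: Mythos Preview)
Your proof is correct and follows essentially the same approach as the paper: exhibit a planar grid (depending only on $k$) that every treewidth-$k$ graph excludes as a minor, observe that a planar graph is trivially single-crossing, and apply Theorem~\ref{thm:scgrcnmainG}. Your parenthetical remark that only the elementary direction of the grid-minor theorem is needed (a $t\times t$ grid has treewidth $t$) is a nice clarification the paper omits.
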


This corollary improves the previous best known bound on \rcn\ of
bounded treewidth graphs from $\Oh{\Delta(G)^2\cdot |G|}$ (see
Theorems \ref{wood2007} and \ref{tw-rec} above) to the optimal $\Oh{\Delta(G)\cdot |G|}$ bound. It should be noted however that Theorem \ref{wood2007} by \citet{WT07} gives  $\Oh{\Delta(G)^2\cdot |G|}$ bound for the convex crossing number of bounded treewidth graphs, and that bounds still stands as the best known for convex drawings. 

In the next section, Section~\ref{preliminaries}, we will introduce
notions that will be helpful in proving Theorem
\ref{thm:scgrcnmainG}. 
In Section~\ref{excludingscgm} we will prove Theorem \ref{thm:scgrcnmainG}. We will conclude in Section~\ref{conclusion}.

\section{Preliminaries}\label{preliminaries}

To avoid confusion on how to count crossings for the remainder of this
paper we will add to the definition of a drawing (see the footnote on
page 2) a requirement that no three edges intersect in one point unless
all three share a common endpoint. Related to that, we define a set of
points $P$ to be in \emph{general position} if no three points of $P$ lie on one line and if no three line-segments between pairs of points in $P$ intersect in one point unless all three share a common endpoint. For the ease of presentation we will add to the definition of \rds\ a condition that all endpoints of $G$ are in general position.

\subsection{Multigraphs}
Our proof of the main result, Theorem \ref{thm:scgrcnmainG}, will require the use of multigraphs. Recall that a multigraph is a graph that may have parallel edges but no loops. For the remainder of this paper, we will always employ the word multigraph when parallel edges are allowed and will use the word graph when they are not allowed, that is when the graph is simple.  The degree of a vertex $v$ in a multigraph $Q$, denoted by $\deg_Q(v)$ is the number of edges of $Q$ incident to $v$. However, unlike in simple graphs, $\deg_Q(v)$ is not necessarily equal to $|N_Q(v)|$.

A \emph{\rd} of a multigraph $Q$ represents vertices, $V(Q)$, by a set
of $|V(Q)|$ points in the plane in general position and represents
each edge by a line-segment between its endpoints. The general position
assumption implies that the only vertices an edge intersects are its
own endpoints, and no point in the drawing is in a 3 {\em distinct}
line-segments (unless all 3 share a common endpoint). It should be
noted that the parallel edges between a same pair of vertices in such
a drawing overlap, as they are represented by the same line-segment. A
\emph{crossing-pair} is a pair of edges in a \rd\ that do not have an endpoint in common and whose line-segments intersect at a common point. The number of crossings in a \rd\ of a multigraph is the number of crossing-pairs in the drawing. The \emph{\rcn} of a multigraph $Q$, denoted by $\RCR{Q}$, is the minimum number of crossings over all \rds\ of $Q$. 

Note that by these definitions, a pair of overlapping edges in a \rd\
of a multigraph is not considered a crossing-pair. That is due to the
fact that in our main proof, we eventually replace overlapping
edge-segments with edge-segments that have only one endpoint in common
and such edges can never cross. Notice also that if one is allowed to
replace line-segments by arcs in a \rd\ of a multigraph $Q$, then it
is trivial to redraw $Q$ such that the resulting "arc" drawing of $Q$
has no overlapping edges and has the same number of crossings as the
starting \rd\ of $Q$. Finally, if $Q$ is a simple graph, these definitions of \rd\ and \rcn\ are equivalent to the earlier ones for simple graphs only.

\subsection{Decompositions and Treewidth}
For graphs $G$ and $H$, an \emph{$H$-decomposition} of $G$ is a collection $(B_x \subseteq V(G) : x \in V(H))$ of sets of vertices in $G$ (called \emph{bags}) indexed by the vertices of $H$, such that
\begin{enumerate}
 \item for every edge $vw$ of $G$, some bag $B_x$ contains both $v$ and $w$, and
 \item for every vertex $v$ of $G$, the set $\{x \in V(H) 
: v \in B_x\}$ induces a non-empty connected subgraph of $H$.
\end{enumerate}

The \emph{width} of a decomposition is the size of the largest bag minus 1. The \emph{adhesion} of a decomposition is the size of the largest intersection between two bags that share an edge in $H$. If $H$ is a tree, then an $H$-decomposition is called a \emph{tree decomposition}. The \emph{treewidth} of a graph $G$ is the minimum width of any tree decomposition of $G$. Tree decomposition and treewidth are key concepts in graph minor structure theory and they have been extensively studied ever since their introduction by Halin \cite{Halin76} and independently by Robertson and Seymour \cite{RStw}.

\subsection{Rectilinear Drawings}

In the process of proving our main result, Theorem \ref{thm:scgrcnmainG} in Section \ref{sec:cls}, we will construct drawings of graphs where at one stage we will replace some vertices in those drawings with disks that fulfill certain criteria. The following lemma will be helpful for that stage.

For any positive integer $h$, let $[h]$ denote the sequence of numbers $[1, \cdots, h]$. When it is clear from the context, we will make no distinction between a vertex $v$ of a graph and the point that represents it in a drawing. Specifically, we will refer to both as $v$ when no confusion can arise. The same will be true of an edge $e$ and the line-segment representing it in a drawing.

\begin{lem}\label{lemma:diskrcn}
 Let $D$ be a \rd\ of any graph $G$. Then for each vertex $w\in V(G)$, there exists a disk $C_w$ of positive radius centered at $w$ such that the following is true. Let $v_1, \dots,v_d$ be the neighbours of $w$ in $G$. Let $P_w$ be {\em any} set of at most $d$ points in $C_w$ such that $V(G)\cup P_w$ is in general position. For each $i\in [d]$, replace the line-segment $\overline{wv_i}$ of $D$ by a line-segment between $v_i$ and any point in $P_w$. Denote that point by $p_i$. For any two $v_i$ and $v_j$ where $i\not=j$, $p_i$ and $p_j$ may not be distinct points. The resulting drawing $D'$ (of the resulting graph $G'$) has the following properties:
 \begin{enumerate}
  \item Any two edges in $G$, neither of which is incident to $w$, cross in $D'$ if and only if they cross in $D$.
  \item For each $i\in [d]$, the edge $wv_i$ and any edge $xy$ where $\{x,y\}\subseteq V(G)-\{w, v_i\}$ cross in $D$ if and only if $v_ip_i$ and $xy$ cross in $D'$.

  \item    All the remaining crossings in $D'$ are crossings between pairs of segments with distinct endpoints in $P_w$.
  \end{enumerate}
 \end{lem}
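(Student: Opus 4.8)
The plan is to choose, for each vertex $w$, the radius of $C_w$ small enough that moving the endpoint of each edge $wv_i$ from $w$ to an arbitrary point $p_i\in C_w$ does not change the crossing pattern of that edge with any edge of $G$ not incident to $w$, and does not create any crossings between the edges incident to $w$ and the rest of the drawing beyond those already present. First I would fix $D$ and, for a single vertex $w$ with neighbours $v_1,\dots,v_d$, consider the finitely many edges $xy$ of $G$ with $\{x,y\}\subseteq V(G)-\{w\}$. Each such segment $\overline{xy}$ either passes through $w$ (impossible, since in a \rd\ the only vertices a segment meets are its endpoints) or lies at some positive distance from $w$. Let $\delta_w>0$ be the minimum, over all such edges $xy$ and over all $i\in[d]$, of the distance from $w$ to $\overline{xy}$; more carefully, I want to capture not just incidence but the \emph{side} of each line: for each edge $xy$ not incident to $w$, the point $w$ lies strictly to one side of the line through $x$ and $y$ (or is collinear with them but outside $\overline{xy}$, a case handled by general position of $D$ itself plus a small perturbation argument). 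Choosing the radius of $C_w$ smaller than $\delta_w$ guarantees that every point $p\in C_w$ lies on the same side of each such line as $w$ does.

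The key step is then a continuity/convexity argument. Fix an edge $xy$ with $x,y\notin\{w,v_i\}$. Whether $\overline{v_ip}$ crosses $\overline{xy}$ is determined by the signs of the orientations of the triples $(v_i,p,x)$, $(v_i,p,y)$, $(x,y,v_i)$, $(x,y,p)$. The last two are unchanged when $p$ ranges over $C_w$, by the side-invariance just established (and they equal the corresponding signs for $w$ in $D$). For the first two: as $p$ moves continuously from $w$ to its final position inside the convex disk $C_w$, the segment $\overline{v_ip}$ sweeps continuously, and it can only start or stop crossing $\overline{xy}$ if at some moment an endpoint of one segment lies on the other segment's supporting line within the relevant range — but $v_i\notin\overline{xy}$'s line is an invariant, $x,y\notin\overline{v_iw}$ extended is guaranteed if we additionally shrink $C_w$ below the distance from $w$ to every line through $v_i$ and an endpoint of a non-incident edge, and $p\in C_w$ never hits these lines for the same reason. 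Hence crossing of $\overline{v_ip}$ with $\overline{xy}$ is constant over $p\in C_w$ and equals crossing of $\overline{wv_i}$ with $\overline{xy}$ in $D$; this gives property~2. Property~1 is immediate since edges not incident to $w$ are untouched. For property~3, any remaining crossing in $D'$ must involve at least one modified edge $v_ip_i$; a crossing between two modified edges $v_ip_i$ and $v_jp_j$ with $i\neq j$ has both $p_i,p_j\in P_w$, and since the un-modified portions all emanate from distinct $v_i$'s — here I would note that two segments sharing an endpoint never cross, so the only genuinely new crossings are those where the two segments' endpoints in $C_w$ are what forces the intersection, i.e. the crossing "happens inside or near $C_w$" and is between a pair of segments whose endpoints in $P_w$ are distinct — and a crossing between a modified edge and an unmodified non-incident edge is covered by property~2.

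The main obstacle I anticipate is the bookkeeping around \emph{collinearity and the side-of-line invariants}: I must shrink $C_w$ below finitely many positive quantities simultaneously — the distances from $w$ to every line through two vertices of $G-\{w\}$ that do \emph{not} have $w$ on that line, and to every line through $v_i$ and a second vertex — and argue that all these quantities are strictly positive, which uses the general-position hypothesis on $D$ (no three points of $V(G)$ collinear) crucially, since otherwise $w$ could lie on such a line and no positive radius would work. A secondary subtlety is that the $p_i$ need not be distinct and $V(G)\cup P_w$ is only required to be in general position, not $V(G')$; I would handle this by performing the argument one vertex $w$ at a time and observing that the disks and the required inequalities depend only on $D$, so the choices are independent and the three properties compose. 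Finally, to keep property~3 clean I would phrase "remaining crossings" as exactly those crossing-pairs of $D'$ not accounted for by properties~1 and~2, and verify directly that such a pair consists of two modified edges $v_ip_i,v_jp_j$ and that the definition forces $p_i\neq p_j$ (if $p_i=p_j$ the segments share an endpoint and do not cross).
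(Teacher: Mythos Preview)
Your proposal is correct and takes essentially the same approach as the paper: both arguments shrink the disk below finitely many positive quantities determined by $D$ and invoke continuity so that the crossing status of each modified edge with every non-incident edge is preserved. The paper packages this via the wedge regions $S_i$ (the union of all segments from $v_i$ into the disk) and a terse list of conditions these regions must satisfy, while you unwind the same idea more explicitly through orientation signs and side-of-line invariants; the content is the same.
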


It should be noted that if $|P_w|=1$, that is if $P_w$ has exactly one point, then $D'$ is a \rd\ of $G$ where a pair of edges of $G$ cross in $D'$ if and only if they cross in $D$. 

\begin{proof}
 Start with the drawing $D$ of $G$ and a disk $C$ centered at $w$ such that the only parts of $D$ that intersect $C$ are $w$ and the edges incident to $w$. 
 Then, for each $i\in [d]$, let $S_i$ be the union of all possible line-segments from $v_i$ to any point in $C$.
 Let $S$ denote the union of all $S_i$, $i\in [d]$. 
 By reducing the radius of $C$ to some positive radius $r$ and then redefining $S$ accordingly, the following becomes true for $D$ and $C$.
 \begin{itemize}
  \item No vertex of $G$ is in $S$ other than $w,v_1,\dots,v_d$. 
  \item For each $i\in [d]$, the only vertices of $G$ that are in $S_i$ are $v_i$ and $w$.
  \item For each $i\in [d]$, the only crossing points of $D$ in $S_i$ are crossings between $wv_i$ and the edges not incident to $w$ in $G$.
  \item No segment between two crossings in $D$ is fully contained in $S$, unless it lies on one of the edges $wv_i$, $i\in [d]$.
 \end{itemize}

 Such a positive radius $r$ exists by continuity and the resulting disk meets the conditions imposed on $C_w$.
\end{proof}

\section{Main Result}\label{excludingscgm}

 In order to prove our main result, Theorem~\ref{thm:scgrcnmainG},  we will use, as one of the tools, the Robertson and Seymour's structure theorem for graphs that exclude a single-crossing graph as a minor \cite{RSonecr}. This structure theorem uses the notion of clique-sum, thus we define it next.

Let $G_1$ and $G_2$ be two disjoint graphs. Let $C_1 = \{v_1,v_2,
\cdots , v_k\}$ be a clique in $G_1$ and $C_2 = \{w_1, w_2, \cdots,
w_k\}$ be a clique in $G_2$, each of size $k$, for some integer $k
\geq 1$. Let $G$ be a graph obtained from $G_1$ and $G_2$ by
identifying $v_i$ and $w_i$ for each $i \in [k]$ and possibly deleting
some of the edges $u_i u_j$ in the resulting clique $C=\{u_1,u_2,
\cdots , u_k\}$ of $G$. Then we say that $G$ is \emph{obtained} by
\emph{$k$-clique-sums} of graphs $G_1$ and $G_2$ (at $C_1$ and
$C_2$). A \emph{$(\leq k)$-clique-sum} is an $l$-clique-sum for any
$l\leq k$. 
The following theorem by Robertson and Seymour \cite{RobertsonNeil2003GMXE} describes a structure of graphs that exclude a single-crossing graph as a minor.

\begin{thm}[\cite{RobertsonNeil2003GMXE}]\label{thm:robertson2003}
	For every \scg\ $X$, there exists a positive integer $t:=t(|X|)$ such that if $G$ is an $X$-minor-free graph, then $G$ can be obtained by $(\leq 3)$-clique-sums of graphs $G_1, \dots, G_h$ such that for each $i\in [h]$, $G_i$ is a planar graph (with no separating triangles) or the treewidth of $G_i$ is at most $t$.
\end{thm}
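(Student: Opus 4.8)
The plan is to prove the statement in two stages. First I would decompose $G$ along its small separators into highly connected pieces realised as $(\le 3)$-clique-sums, and then analyse a single such piece through the following \emph{crux lemma}: every internally $4$-connected $X$-minor-free graph (with $X$ single-crossing) is planar or has treewidth bounded by a function of $|X|$. Granting the crux, the theorem is immediate, since the decomposition exhibits $G$ as $(\le 3)$-clique-sums of pieces each of which the crux classifies as planar (with no separating triangle) or of treewidth at most $t:=t(|X|)$.

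\textbf{Stage 1 (decomposition).} I would take the canonical decomposition of $G$ by separators of order at most $3$ (the $3$-cut, Tutte-type decomposition), whose \emph{torsos} $G_1,\dots,G_h$ are internally $4$-connected or have at most three vertices. By construction $G$ is obtained from the torsos by $(\le 3)$-clique-sums: each torso is the corresponding piece together with the at most $\binom{3}{2}=3$ edges completing each adhesion set to a clique, and re-assembling the torsos deletes exactly those completion edges absent from $G$, which is precisely what the clique-sum operation permits. The key point I must verify is that each torso is a \emph{minor} of $G$: a completion edge $u_iu_j$ on a $3$-separator is realised by routing a path through the connected part of $G$ on the opposite side of the separator, and the (at most three) completion edges are realised simultaneously by disjoint such paths, which exist by Menger's theorem once the opposite side attaches $3$-connectedly to the separator; the low-connectivity cases split off trivially through clique-sums of order $\le 2$. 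Hence every $G_i$ is a minor of $G$ and is therefore itself $X$-minor-free.

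\textbf{Stage 2 (crux).} Fix a torso $G_i$, which is internally $4$-connected and $X$-minor-free, and suppose $\tw(G_i)$ exceeds a large threshold $g(|X|)$; I must then produce an $X$-minor unless $G_i$ is planar. By the grid-minor theorem \cite{ROBERTSON198692}, $G_i$ contains an $r\times r$ grid minor $R$ with $r$ as large as desired in terms of $|X|$. Since $X$ is single-crossing it has an edge $e=ab$ with $X-e$ planar, and every planar graph on $|X|$ vertices is a minor of a sufficiently large grid; so $X-e$ is a minor of a bounded sub-grid of $R$, with $a$ and $b$ mapped to prescribed branch sets. The entire difficulty is to supply the single ``crossing'' edge $e$, namely a path of $G_i$ linking the branch sets of $a$ and $b$ that is internally disjoint from the realisation of $X-e$. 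Here I would invoke the flat-wall dichotomy applied to the wall underlying $R$: either the wall is flat, which, together with internal $4$-connectivity and the absence of any jump across it, forces $G_i$ to be planar (and then internal $4$-connectivity rules out separating triangles, matching the statement), or the wall is non-flat, producing a linkage that jumps across it and supplies exactly the one extra path completing $X-e$ to an $X$-minor.

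The main obstacle is precisely the non-flat case of Stage 2: converting ``$G_i$ is non-planar but carries a huge wall'' into a concrete $X$-minor, while guaranteeing that the jumping path is disjoint from the grid realisation of $X-e$. This is the technical heart of Robertson and Seymour's argument, and it is where the single-crossing hypothesis is decisive: a non-flat wall in a general $H$-minor-free graph need only yield bounded-genus, apex, or vortex structure, whereas when the forbidden minor can be realised as a planar part plus one crossing, a single cross over a large flat-ish wall already forces $X$. This is exactly what collapses the general structure theorem, on single-crossing-minor-free graphs, to the clean dichotomy planar-or-bounded-treewidth, and controlling the interaction of the wall with that at-most-one non-planar jump is the step demanding the full flat-wall machinery.
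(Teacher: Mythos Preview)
The paper does not prove this theorem at all: it is quoted from Robertson and Seymour \cite{RobertsonNeil2003GMXE} and used as a black box. The sole in-paper justification is the footnote observing that the clause ``with no separating triangles'' (absent from the original Robertson--Seymour statement) can be added afterwards, since any planar piece with a separating triangle is itself a $3$-clique-sum of two strictly smaller planar pieces along that triangle. So there is no ``paper's own proof'' to compare against.

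Your proposal is therefore not a variant of the paper's argument but an independent sketch of the Robertson--Seymour proof itself. As a high-level outline it is broadly on target: the two-stage plan (Tutte-type decomposition into internally $4$-connected torsos, then a grid/flat-wall dichotomy on each torso) is the right architecture, and you correctly locate the one place the single-crossing hypothesis bites, namely that a single jump over a large flat wall already realises $X$. You also recover the ``no separating triangles'' clause via internal $4$-connectivity, which matches the paper's footnote in spirit. That said, Stage~2 as written is only a plausibility sketch: the passage from ``non-flat wall'' to ``one jump disjoint from a prescribed planar embedding of $X-e$ inside the wall'' needs the full flat-wall/tangle machinery and a careful rerouting argument, and your Stage~1 claim that all three completion edges of a torso are realised by \emph{disjoint} paths via Menger is not quite right as stated (the standard argument routes each completion edge through a distinct component on the far side of the separator, not via a Menger application to a single side). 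These are genuine technical debts if you intend a self-contained proof, but for the purposes of this paper they are moot: the theorem is simply cited.
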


The graphs $G_1,\cdots, G_h$ in Theorem \ref{thm:robertson2003} are called the \emph{pieces} of the decomposition. 

Theorem\footnote{Note that the original statement of Theorem \ref{thm:robertson2003}  by Robertson and Seymour \cite{RobertsonNeil2003GMXE} does not mention separating triangles. The reason such a statement can be made is that any planar graph $G$ containing a separating triangle can itself be obtained by $3$-clique-sums of two strictly smaller planar graphs, $G_1$ and $G_2$, where the clique-sum is performed on that separating triangle.}
 \ref{thm:robertson2003}  is equivalent to stating that every
 $X$-minor-free graph $G$ has a tree decomposition of adhesion at most
 $3$ such that the vertices in each bag of the decomposition induce in
 $G$ either a planar graph (with no separating triangles) or a graph
 of treewidth at most $t$. Armed with these notions, we are now ready
 to state a more precise version of our main result  (Theorem~\ref{thm:scgrcnmainG}).

\begin{thm}\label{thm:scgrcnmain}
Let $X$ be a single-crossing graph. Let $G$ be an $X$-minor-free graph
and let $t:=t(|X|)$ be the integer from Theorem \ref{thm:robertson2003}. 
Then $G$ has a rectilinear crossing number at most $3\cdot(t^2+ 2t+2)\cdot\Delta(G)\cdot||G||$.
\end{thm}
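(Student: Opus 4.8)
The plan is to induct on the number $h$ of pieces in the clique-sum decomposition from Theorem~\ref{thm:robertson2003}, while maintaining a strong invariant on the drawing produced: namely that each clique $C$ of size at most $3$ on which a clique-sum is performed is drawn inside a tiny disk whose interior is disjoint from the rest of the current drawing, and that we have control over how many edges of $G$ cross into any such disk. Concretely, I would first handle the two kinds of base pieces. For a piece $G_i$ of treewidth at most $t$, Theorem~\ref{tw-rec} gives a rectilinear drawing with $\Oh{t\cdot\Delta\cdot\sum_v\deg(v)^2}$ crossings; for a planar piece $G_i$ (no separating triangles), use F\'ary--Wagner to get a crossing-free straight-line drawing. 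In either case the crossing count of the piece in isolation is at most roughly $(t^2+2t+2)\cdot\Delta(G_i)\cdot\|G_i\|$ times a small constant — this is where the precise constant in the statement is calibrated against Theorem~\ref{tw-rec}, with the planar case costing nothing.

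Next I would build the clique-sum step in the straight-line setting. When we glue $G_{h}$ onto the rest along a clique $C$ of size $k\le 3$, both the partial drawing of $G_1\cup\dots\cup G_{h-1}$ and the drawing of $G_h$ must be reconciled at $C$. Here is the role of Lemma~\ref{lemma:diskrcn}: using it on each of the (at most three) vertices of $C$, I can shrink a neighbourhood of each such vertex to a disk $C_w$ and freely relocate the endpoints of all edges incident to $w$ to an arbitrary cloud of points inside $C_w$ without creating any crossings other than among the relocated stubs themselves, and without changing which of the ``old'' edges cross. So the gluing procedure is: scale one drawing down to fit inside a disk that was reserved around the clique $C$ in the other drawing (a $k$-vertex clique with $k\le 3$ drawn inside a disk is fine since its edges, being the at most three segments spanning three points, can be routed to hug the boundary), then apply Lemma~\ref{lemma:diskrcn} at the clique vertices to splice the two stub-systems together. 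Deleting a subset of the clique edges (as permitted in the definition of clique-sum) only removes edges, hence only removes crossings.

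The crossings in the final drawing decompose into three buckets: (i) crossings internal to some piece $G_i$, which are bounded by the per-piece estimates above and sum to at most $3(t^2+2t+2)\Delta(G)\sum_i\|G_i\|$; (ii) crossings created by the stub-splicing inside the disks at clique-sum vertices, which by part~3 of Lemma~\ref{lemma:diskrcn} involve only segments with distinct endpoints in the local point cloud $P_w$, hence are at most $\binom{|P_w|}{2}\le\binom{\deg_G(w)}{2}$ per clique vertex $w$; and (iii) there are none of any other kind, because the disks are interior-disjoint from the rest of the drawing. The key accounting point is that $\sum_i\|G_i\|$ telescopes back to $\Oh{\|G\|}$: each clique-sum adds at most $\binom{3}{2}=3$ edges to the clique and the pieces overlap only on these bounded cliques, and since $G$ is from a proper minor-closed family $\|G\|\in\Oh{|G|}$; likewise $\sum_i\deg_{G_i}(v)^2$ is comparable to $\deg_G(v)^2$ up to the adhesion bound. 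Combining these, the total is at most $3(t^2+2t+2)\Delta(G)\|G\|$ after the constants are set to absorb bucket (ii) into bucket (i).

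I expect the main obstacle to be bucket (ii) together with the bookkeeping of ``reserved disks'': when a single clique vertex $w$ participates in several clique-sums (it can be shared among many pieces in the tree decomposition of adhesion $3$), one must be careful that the disk reserved around $w$ is subdivided consistently so that each incident piece gets its own sub-disk and the stubs from different pieces do not interfere beyond the $\binom{\deg(w)}{2}$ budget — in other words, the bound on extra crossings at $w$ must be charged globally to $\deg_G(w)$, not locally per sum, and Lemma~\ref{lemma:diskrcn} must be invoked in the right order (innermost pieces first) so that the ``general position'' hypothesis is preserved at every step. A secondary technical point is verifying that Theorem~\ref{tw-rec}'s drawing of a treewidth-$t$ piece can be massaged so that a chosen clique of size $\le 3$ sits on the outer face (or at least inside a crossing-free disk) without inflating the crossing count; this is where the ``no separating triangles'' hypothesis on planar pieces, and the freedom to choose the root of the tree decomposition, get used.
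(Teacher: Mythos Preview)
Your proposal has a genuine structural gap in the gluing step, and a second gap in the base case for treewidth pieces.

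\textbf{The gluing problem.} When you perform a $3$-clique-sum at $C=\{v_1,v_2,v_3\}$, the three vertices of $C$ sit at three \emph{separate} points in the parent's drawing; there is no ``disk reserved around the clique $C$'' into which you can drop the child. Lemma~\ref{lemma:diskrcn} lets you perturb the edges incident to a \emph{single} vertex, not collapse three distant vertices together. You acknowledge this as a ``secondary technical point'' (arranging for a chosen clique to lie in a small crossing-free disk), but it is in fact the crux, and worse: a single piece $G_i$ may be the parent of \emph{many} children, each attached at a different clique, so you would need all of those cliques simultaneously localized. The paper's solution is quite different: it never tries to colocate the clique vertices. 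Instead it removes the parent clique $P_j$ from the child $G_j$, represents the entire child by a \emph{single} dummy vertex $c_j$ in the parent, and encodes every edge from a clique vertex $v\in P_j$ to a vertex deep in the child's subtree as a (possibly parallel) edge $vc_j$. The resulting multigraph is the $(\le 3)$-simplicial blowup $Q_i^-$, and one needs $Q_i^-$ --- not $G_i$ --- to have small \rcn. This is precisely the ``$(k,c)$-agreeable'' condition, and it is strictly stronger than bounding $\RCR{G_i}$.

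\textbf{The treewidth base case.} You invoke Theorem~\ref{tw-rec} for the bounded-treewidth pieces, but that theorem gives $O(t\cdot\Delta(G_i)^2\cdot|G_i|)$, i.e.\ quadratic in $\Delta$. The constant $t^2+2t$ in the target bound comes from a \emph{linear-in-$\Delta$} estimate for treewidth-$t$ graphs, which is itself a new result of the paper (Theorem~\ref{thm:twrcn} and Lemma~\ref{lemma:twsbrcn}), obtained by applying the clique-sum machinery of Theorem~\ref{thm:clsrcn} to the decomposition of a treewidth-$t$ graph into copies of $K_{t+1}$. So you cannot cite a pre-existing $O(\Delta\cdot|G|)$ bound here; it has to be produced by the same mechanism, and in the stronger ``agreeable'' form (i.e.\ for simplicial blowups, not just for the piece itself).

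In short, the missing idea is the simplicial-blowup / dummy-vertex construction (Theorem~\ref{thm:clsrcn}): it converts the problem of merging at a $k$-clique into the problem of replacing a single point by a disk, at the price of having to draw multigraphs with controlled parallel-edge multiplicities rather than the original pieces.
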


 Theorem~\ref{thm:scgrcnmain} is a strengthened version of
 Theorem~\ref{thm:scgrcnmainG} by Theorem  \ref{thm:robertson2003} and
 the fact that $\E{G}\in \Oh{|X|\sqrt{\log{|X|}}\cdot \V{G}}$
 \cite{K84,T84} (as discussed earlier). Hence, the remainder of this
 section will be dedicated to proving Theorem~\ref{thm:scgrcnmain}. To
 do so, one has to be able to produce \rds\ of the pieces, $G_1,
 \dots, G_h$, of the decomposition (from Theorem
 \ref{thm:robertson2003}) with the claimed number of crossings and
 then combine these drawings by conducting clique-sums. The following is a sketch of the two main steps our proof will take.

\begin{enumerate}
 \item[Step 1.] Foremost, Theorem~\ref{thm:scgrcnmain} has to be true for the pieces $G_i$ of the decomposition, namely the planar graphs and bounded treewidth graphs. By the F\'ary-Wagner theorem \cite{istvan1948straight,wagner1936bemerkungen}, we know that Theorem~\ref{thm:scgrcnmain} is true for all planar graphs. In fact, it is true with bound zero for the rectilinear crossing number. On the contrary, if $G_i$ is a bounded treewidth graph, the required $O(\Delta(G_i)\cdot |G_i|)$ bound on its rectilinear crossing was not known prior to our work. Thus one of the goals of this paper is to prove that bound for bounded treewidth graphs as one of the necessary steps in the proof of Theorem~\ref{thm:scgrcnmain}. 

 \item[Step 2.] Suppose now that for each piece, $G_i$ of the
   decomposition,  we have already established the $O(\Delta(G_i)\cdot
   |G_i|)$ bound for the rectilinear crossing of $G_i$. The main goal
   then becomes demonstrating that the \rds\ of $G_1, G_2,\dots, G_h$ can be joined by performing  clique-sums without increasing the number of crossings in the final drawing of $G$ by too much. Prior to this work it was not known how to conduct clique-sums on \rds\ while achieving that goal. In particular, we need to join \rds\ of  $G_1, G_2,\dots, G_h$ in such a way that the resulting number of crossings in the \rd\ of $G$ is $O(\Delta(G)\cdot |G|)$.
\end{enumerate}

The main challenge for proving Theorem~\ref{thm:scgrcnmain} is Step 2 above. To overcome that challenge, we introduce the notion of simplicial blowups of graphs. The use of these simplicial blowups however impacts Step 1. In particular, it is not enough any more to prove that the pieces of the decomposition have $O(\Delta(G_i)\cdot |G_i|)$ \rcn. We must prove a stronger condition, namely that the simplicial blowups of the pieces have such a \rcn. 

In Section~\ref{sec:cls} we introduce simplicial blowups and
demonstrate how to achieve Step 2. In Section \ref{sec:tooldraw} we introduce graph partitions and present a helpful lemma for producing \rds. In Section~\ref{sec:planar} and \ref{sec:tw}, we then prove that Step 1 above can be accomplished, or more precisely that simplicial blowups of planar graphs and bounded treewidth graphs have the desired \rcn. Once those two steps have been achieved, we will conclude the proof of Theorem~\ref{thm:scgrcnmain} in Section~\ref{section:mainproof}.

\subsection{Bound for Rectilinear Crossing Number Using Clique-Sums}\label{sec:cls}

	A multigraph $Q$ is called a \emph{$(\leq k)$-simplicial blowup} of a graph $G$ if $Q$ can be obtained from $G$ by adding an independent set of vertices $S$ to $G$, and performing the following steps for each vertex $u$ in $S$:
\begin{enumerate}
	\item Make $u$ adjacent to all the vertices of some clique of
          size at most $k$ of $G$ 
    \item Add  zero or more parallel edges between $u$ and its neighbours in $G$.
\end{enumerate}
and finally, once Steps 1 and 2 are conducted on all vertices of $S$, delete zero or more edges from each clique of $G$ involved in Step 1.

Theorem~\ref{thm:clsrcn} is the key technical tool of this paper. It
shows how \rds\ (of simplicial blowups) of the pieces of a
decomposition can be combined into a \rd\ of a graph obtained by
clique-sums of the pieces, all while not increasing the final number
of crossings by too much. The previous result  on the crossing number
of minor-closed families (Theorem \ref{vidaold}) by \citet{dujmovic2018tight}, also had to deal with performing clique-sums on drawings while controlling the crossing number. Our proof of Theorem~\ref{thm:clsrcn} is inspired by their proof. However the drawings produced by their theorem have many bends per edge are thus are far from rectilinear drawings.

The following theorem is stated in a form that is more general than we will require. Specifically, the theorem does not require the pieces \gi\ of the decomposition to be planar or of bounded treewidth. As such, Theorem \ref{thm:clsrcn} may be useful in future work on \rcn\ of $X$-minor-free graphs where $X$ is not necessarily a single-crossing graph and thus the pieces of the decomposition are the almost embeddable graphs from the Robertson and Seymour graph minors theory.

We say that a graph $R$ is \emph{$(k,c)$-agreeable} if for every induced subgraph $R'$ of $R$ and every $(\leq k)$-simplicial blowup $R^*$ of $R'$, $\RCR{R^*}\leq c \cdot \Delta (R^*)\cdot ||R^*||$.

\begin{thm}\label{thm:clsrcn}
Let $c$ be a positive number, $k$ a positive integer, and $G_1, \cdots, G_h$ a collection of graphs such that every \gi\ is $(k,c)$-agreeable. Then every graph $G$ that can be obtained by $( \leq k )$-clique-sums of graphs $G_1, \cdots, G_h$ has rectilinear crossing number $\RCR{G}\leq k\cdot (c+2)\cdot \Delta (G)\cdot ||G||$.
\end{thm}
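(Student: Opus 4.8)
The plan is to induct on $h$, the number of pieces, building up the drawing of $G$ one clique-sum at a time. Write $G$ as a $(\leq k)$-clique-sum of $G_1$ with a graph $G'$ obtained by $(\leq k)$-clique-sums of $G_2,\dots,G_h$; let $C=\{u_1,\dots,u_\ell\}$ ($\ell\le k$) be the clique along which the sum is performed, sitting inside $G_1$ and inside $G'$. The key idea is that, rather than trying to glue two finished drawings along a triangle (impossible in general for straight-line drawings, since the three summing vertices would have to be placed at three specific points common to both drawings), we absorb the other side of the sum into a single fresh vertex. Concretely, in the drawing of $G_1$ we will ``split off'' a new vertex $s$ adjacent to $C$ — this is exactly where the simplicial-blowup notion enters: we do not merely use the drawing of $G_1$, but a drawing of a $(\leq k)$-simplicial blowup $G_1^*$ of (an induced subgraph of) $G_1$, in which $s\in S$ has been made adjacent to the clique $C$, possibly with parallel edges. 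By $(k,c)$-agreeability of $G_1$, such a drawing of $G_1^*$ exists with at most $c\cdot\Delta(G_1^*)\cdot\|G_1^*\|$ crossings. We then apply Lemma~\ref{lemma:diskrcn} at the vertex $s$: inside a small disk $C_s$ around $s$ we may place up to $\deg(s)$ points and reroute the edges from $C$ to land on them, without creating any crossings not already accounted for, except crossings among segments with both endpoints in $C_s$. The plan is to use that disk as the ``socket'' into which a suitably scaled copy of the (inductively obtained) drawing of $G'$ is inserted, placing the copies of $u_1,\dots,u_\ell$ at (possibly repeated) points of $P_s\subseteq C_s$.

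The steps, in order, are as follows. (1) Set up the induction and the decomposition of $G$ into $G_1$ and $G'$ along $C$; note $\Delta(G')\le\Delta(G)$, $\|G'\|\le\|G\|$, and similarly for $G_1$, and that the pieces of $G'$ are still $(k,c)$-agreeable, so the inductive hypothesis applies to $G'$. (2) In $G_1$, introduce the new vertex $s$ adjacent to all of $C$, with parallel edges so that $\deg_{G_1^*}(s)$ matches the number of edges of $G'$ that will need to be routed out of the socket — here one must be slightly careful: it suffices for $s$ to have one edge to $u_i$ for each edge of $G'$ incident to $u_i$ that leaves the clique, i.e.\ $\deg(s)$ is bounded by the number of edges of $G$ on the $G'$-side incident to $C$. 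Delete from the clique $C$ in $G_1$ exactly the edges of $C$ that are absent in $G$ (this is the final deletion step allowed in the definition of simplicial blowup and of clique-sum). Invoke $(k,c)$-agreeability to draw $G_1^*$. (3) Apply Lemma~\ref{lemma:diskrcn} at $s$ to obtain a disk $C_s$; inductively draw $G'$, scale and translate it to fit inside $C_s$ in general position, and identify the clique vertices of $G'$ with the endpoints in $P_s$ of the rerouted edges — the identification is consistent because each $u_i$ on the $G'$-side receives exactly the rerouted copies of its incident edges, and distinct edges of $G'$ leaving $C$ at the same $u_i$ simply get the same point $p$, which Lemma~\ref{lemma:diskrcn} explicitly allows. (4) Count crossings: by Lemma~\ref{lemma:diskrcn}(1)--(2) the crossings involving the old $G_1$-part and the rerouted edges number at most those in the drawing of $G_1^*$; by (3) and the scaling, all crossings strictly inside $C_s$ are either crossings of the inserted drawing of $G'$ (at most the inductive bound for $G'$) or crossings among the ``bridge'' segments from $C_s$'s boundary to the $u_i$'s, and the latter are at most $\deg(s)^2\le\deg(s)\cdot\Delta(G)$ in number, and in fact can be bounded by something like $2\cdot(\text{edges of }G'\text{ at }C)\cdot\Delta(G)$. (5) Add up, absorb the blowup parameters: $\Delta(G_1^*)\le\Delta(G)$ provided we distribute the parallel edges at $s$ among the $u_i$ so that no $\deg_{G_1^*}(u_i)$ exceeds $\deg_G(u_i)$ plus the number of $G'$-edges at $u_i$, which is $\le\deg_G(u_i)\le\Delta(G)$ — here some bookkeeping is needed, but the point is that every vertex's degree in every piece-blowup is dominated by its degree in $G$, and $\sum$ of $\|G_i^*\|$ telescopes to $O(\|G\|)$. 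The claimed bound $k\cdot(c+2)\cdot\Delta(G)\cdot\|G\|$ then follows, the ``$c$'' coming from the agreeable drawings of the pieces and the ``$+2$'' absorbing the bridge crossings introduced at each of the (at most $\|G\|$, but really one per clique-sum) sockets, with the factor $k$ accounting for clique size.

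The main obstacle I expect is Step~(5), the degree and edge bookkeeping: one must choose the multiplicities of the parallel edges at each new vertex $s$ so that (a) each $s$ has enough ``ports'' to route every $G'$-edge incident to $C$ out of the socket, while (b) no vertex $u_i$ of the clique ends up with degree in $G_1^*$ larger than a constant times its degree in $G$, and (c) the total number of edges summed over all the blowups $G_i^*$ stays $O(\|G\|)$ rather than blowing up multiplicatively across the $h$ clique-sums. Getting the constant exactly $k(c+2)$ rather than something larger requires being careful that the bridge crossings at each socket are charged against edges incident to that socket's clique and that each edge of $G$ is charged $O(1)$ times overall; a clean way is to charge the $\le\deg(s)^2$ bridge crossings of a socket to the $\deg(s)\le$ (number of $G'$-edges at $C$) edges that pass through it, giving $\le\Delta(G)$ crossings per such edge and hence $\le\Delta(G)\cdot\|G\|$ in total from all sockets, which together with the two endpoints' worth of clique structure yields the ``$+2$''. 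A secondary, more geometric obstacle is making sure the scaled-in copy of $G'$ together with $V(G_1^*)$ stays in general position and that no edge of $G_1^*$ other than the rerouted ones enters $C_s$ — but this is precisely guaranteed by the construction of $C_s$ in the proof of Lemma~\ref{lemma:diskrcn}, so it only requires citing that lemma correctly.
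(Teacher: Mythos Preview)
Your high-level plan --- use simplicial blowups to create a ``socket'' dummy vertex, draw the blowup via agreeability, and use Lemma~\ref{lemma:diskrcn} to insert the rest of the graph into the disk --- matches the paper's. But there is a genuine gap in Step~(3): you keep the clique $C$ in \emph{both} $G_1^*$ and $G'$. After you scale the inductive drawing of $G'$ into $C_s$, each $u_i$ is represented by two points, one outside $C_s$ (from $G_1^*$) and one inside (from $G'$). Your ``identification'' then reroutes the edge $u_is$ so that it runs from the outer copy of $u_i$ to the inner copy of $u_i$; that segment does not represent any edge of $G$, and the actual edge $u_iw$ (with $w\in G'\setminus C$) is now drawn twice --- once as this bogus bridge and once inside the $G'$-drawing --- while $u_i$ still has two locations. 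You cannot fix this by moving the inner copies of the $u_i$ out to their outer positions, since moving a vertex in a \rd\ can create uncontrolled crossings.

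The paper resolves this by never duplicating vertices: it removes the parent clique from every non-root piece, working with $\gim=G_i-P_i$, so that $V(\gm1),\dots,V(\gm h)$ partition $V(G)$ and each edge of $G$ lies in exactly one $\qim$. The blowup $\qim$ is then a blowup of the induced subgraph $\gim$ (this is why agreeability is stated for all induced subgraphs), with one dummy vertex per \emph{child} rather than a single dummy for ``the rest of $G$''. The pieces are drawn all at once and then merged top-down; a pure induction on $h$ as you set it up is awkward because once you delete $C$ from $G'$ the clique-sum structure of $G'-C$ need not align with $G_2,\dots,G_h$. Two smaller corrections: the dummy vertex $s$ can have degree up to $k\cdot\Delta(G)$, not $\Delta(G)$, so $\Delta(G_1^*)\le k\Delta(G)$ is the bound you actually get (this is Claim~\ref{claim:sbdegree} in the paper and is what produces the factor $k$ in front of $c$); and an isthmus edge, once it enters the disk, can cross edges of the inserted drawing arbitrarily --- the paper charges those crossings to the inner edges (at most $k\Delta(G)$ isthmus edges enter any one disk), which is where the ``$+2$'' comes from, not from sibling crossings alone.
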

\begin{proof}
  
Since clique-sums identify vertices, to avoid confusion, we will
assume that the vertices of the final graph $G$ have names and that
each vertex in each piece $G_i$, $i\in[h]$  inherits its name from
$G$. Thus vertices that are identified by clique-sums have the same
name in the pieces involved. Consequently, there may be multiple vertices with the
same name in the disjoint union of $G_1, G_2, \dots G_h$. 
  
We may assume that the indices $1,\dots,h$ are such that for all $j
\geq 2$, there exists a minimum $i$ such that $i < j$ where $G_i$ and
$G_j$ are joined at some clique $C$ of $G_i$ when constructing $G$. We
define $G_i$ to be the \emph{parent} of $G_j$, with $P_j = V(C)$ being
the \emph{parent clique} of $G_j$. The parent clique of $G_1$ is the
empty set. Note that, by the introductory paragraph of this proof, it makes sense
to talk about the clique $P_j$ as existing in both in $G_i$ and in $G_j$.

Let $T$ be a rooted tree with vertex set $\{1,\dots,h\}$, where $ij$ is an edge of $T$ if and only if $G_j$ is a child of $G_i$. Let $T_i$ denote the subtree of $T$ rooted at $i$ and $U_i$ be the set of the children of $i$ in $T$. 

For each $i \in [h]$, let $G^{-}_{i} = G_i - P_i$. Note that for each $v \in V(G)$, there is exactly one $i \in [h]$ such that $v$ is in $V(G^{-}_{i})$.  Thus $V(G^{-}_{1}), \cdots , V(G^{-}_{h})$ is a partition of $V(G)$. We say that a vertex $v$ of $G$ \emph{belongs} to vertex $i$ of $T$ if $v\in\gim$.
For each $i\in [h]$, let $G[T_i]$ denote the graph induced in $G$ by the vertices of $G$ that belong to the vertices of $T_i$, that is the graph induced in $G$ by $\bigcup\{V(\gjm)\, :\, j\in T_i\}$.

\noindent\textbf{Defining the $\mathbf{(\leq k )}$-simplicial blowups
  of pieces.} To prove the theorem, we now define, for each \gim, $i \in [h]$, a specific $(\leq k
)$-simplicial blowup, denoted by \qim. To define \qim, start with
\gim. For each child $G_j$ of $G_i$, add a new vertex $c_j$ to
\gim. We call $c_j$ a \emph{dummy} vertex and say that $c_j$
\emph{represents} $G_j$ in \qim. Note that for all $j\in [2, \cdots,
h]$, there is exactly one $i<j$ such that \qim\ has a vertex that
represents \gj\ (namely, the vertex $c_j$). For the clarity of the
next statement, note first that
$V(\gim)\cap P_j$ is not empty as otherwise $P_j$ would also exist in some
$G_f$ where $f<i$ and $G_j$ would not be a child of $G_i$.  For each edge $vw \in
E(G)$, where $v \in V(\gim)\cap P_j$ and $w \in G^{-}_{\ell}$, where
$i<\ell$ and $\ell \in V(T_j)$, connect $v$ to $c_j$ by an edge. Label
that edge with the triple $(v, w, \mathcal{P}_{vw})$, where
$\mathcal{P}_{vw}$ is the path in $T$ from $i$ to $\ell$. We call the
edge labelled $(v, w, \mathcal{P}_{vw})$ in \qim\ an \emph{isthmus}
edge. It represents the edge $vw$ in the final drawing of $G$. We consequently refer to the edge $vw$ of $G$  as isthmus edge as well.
 We say that two isthmus edges are \emph{siblings} if they are adjacent
to the same dummy vertex. For a vertex $u$ in \qim\ such that  $u$ is in $P_j$ for some child
$G_j$ of $G_i$,  we say that $u$ is \emph{involved} in a clique-sum in
\qim. 
Thus each isthmus edge of \qim\ has an endpoint in \gim\ that is involved in some clique-sum in \qim.
We finally remove from \qim\ the edges in $P_j$ that are not in $G$. 
 We set the resulting multigraph to be \qim. 
 
Notice that \qim\ is a $(\leq k)$-simplicial blowup of \gim.  Since
\gi\ is $(k,c)$-agreeable (by the assumption) and since \gim\ is an induced subgraph of \gi, it follows that $\RCR{\qim}\leq c \cdot \Delta (\qim)\cdot ||\qim||$.

For $i\in [h]$, consider a \rd\ of \qim\ with at most $c \cdot \Delta
(\qi)\cdot ||\qi||$ crossings. We will construct the desired \rd\ of
$G$ by joining these \rds\ of \qim. Consider for a moment solely the
disjoint union of these \rds. The resulting \rd\ of the disjoint union
has most $\sum_{i\in [h]} c \cdot \Delta (\qim)\cdot ||\qim||$
crossings.

Notice that there is one-to-one mapping between the edges of $G$ and
the edges in the union of all $Q^-_1, Q^-_2, \dots, Q^-_h$, that is in
$\bigcup_{i\in [h]} E(\qim)$ (where the isthmus edges of $G$ map to
the isthmus edges in the union and where the non-isthmus edges of $G$
map to the  non-isthmus edges of the union). Thus $||G||=\sum_{i\in
  [h]} ||\qim||$. Hence, if for all $i\in [h]$, $\Delta (\qim)\leq
k\cdot\Delta(G)$, the above sum would be upper bounded by $c\cdot k
\cdot \Delta(G)\cdot \sum_{i\in [h]} ||\qim||=c\cdot
k\cdot\Delta(G)\cdot ||G||$. This is akin to the upper bound that we
want on the rectilinear crossing number of $G$.  
Thus we want to first bound the degree of each vertex in \qim\ by
$k\cdot\Delta(G)$. This is not completely obvious due to the addition
of the dummy vertices in the construction of \qim\ and also due to the
fact that  clique-sums allow for edge deletions from the cliques. 

	\begin{claim}\label{claim:sbdegree}
		For every $i\in [h]$ and every $v \in\qim$, $\deg_{\qim} (v) \leq k \cdot \Delta (G)$. 
	\end{claim}
	\begin{proof}
There are three cases to consider.
		\begin{description}
			\item[Case 1.] {\em $v$ is a dummy vertex of \qim}. \\
		By construction, for some $j\in U_i$, $v$ represents some \gj\ and is adjacent to at most $k$ vertices of the parent clique $P_j$ in \gi. Each edge between a vertex $u \in P_j$ and $v$ corresponds to an (isthmus) edge in $G$ adjacent to $u$. Since $\deg_G(u)\leq\Delta(G)$, $v$ is incident to at most $k \cdot \Delta (G)$ edges, giving $\deg_{\qim} (v) \leq k \cdot \Delta (G)$.
			\item[Case 2.] {\em $v$ is in \gim\ (and thus not a dummy vertex) and $v$ is not involved in any clique-sums}. Then, it follows that $\deg_{\qim} (v) \leq \deg_G (v)$.
			\item[Case 3.] {\em $v$ is in \gim\ (and thus not a dummy vertex) and is involved in at least one clique-sum}.\\
				
 Consider every $j \in U_i$ such that $v \in P_j$. Then $v$ has a
 least one neighbour in $G[T_j]$ and thus at least one edge connecting
 it to $c_j$, otherwise the clique-sum could have omitted $v$.   
Additionally, there exists a one-to-one mapping between the set of
edges in $G$ between $v$ and its neighbours in $G[T_j]$ and the set of
(parallel isthmus) edges between $v$ and $c_j$ in \qim. 
In other words, there is a one-to-one mapping between the isthmus edges incident to $v$ in \qim\ and the isthmus edges incident to $v$ in $G$. Finally, consider the non-isthmus edges incident to $v$ in \gim.
Each edge of $G_i$ that has been removed in the construction of \qim\
(namely the edges removed from $P_j$) was also removed in $G$, thus
$\deg_{\qim} (v) \leq \deg_G (v)$.
\end{description}
\end{proof}

With degrees of the vertices of \qim sorted out, we are ready to
describe how to construct a \rd\ of $G$ from the \rds\ of $Q^-_1, Q^-_2, \dots, Q^-_h$.

\noindent\textbf{Constructing the \rd\ of $\mathbf{G}$ from the \rds\ of $\mathbf{Q^-_1, Q^-_2, \dots, Q^-_h
}$.}
Since  for each $i\in [h]$, \qim\ is $(k,c)$-agreeable, $\RCR{\qim}\leq c \cdot \Delta (\qim)\cdot ||\qim||$. By Claim \ref{claim:sbdegree}, $\RCR{\qim}\leq c\cdot k \cdot \Delta (G)\cdot ||\qim||$. Let $D(\qim)$ denote a \rd\ of \qim\ with at most $c \cdot k \cdot \Delta (G) \cdot ||\qim||$ crossings.  For the remainder of the proof, we will show how to construct a rectilinear drawing, $D(G)$, of $G$ by combining the \rds\ $D(\qim)$ of \qim, $i \in [h]$, such that the resulting number of crossings in $D(G)$ is as claimed in the theorem. 

Note that removing dummy vertices (and their incident isthmus edges)
from $D(\qim)$ gives a \rd\ of \gim. Denote these \rds\ by
$D(\gim)$. In the final drawing, $D(G)$, the drawing of each \gim\
will be identical to $D(\gim)$, possibly scaled and/or rotated. In
other words, in $D(G)$, the implied \rd\ of the disjoint union of $G^-_1, G^-_2, \dots, G^-_h$ will be the disjoint union of $D(Q^-_1), D(Q^-_2), \dots, D(Q^-_h)$. Only the isthmus edges will be redrawn in this construction.

 We will join the \rds\ $D(\qim)$, $i \in [h]$ in the order of their
 indices. For $\ell\in [h]$, $D_\ell$ denotes the \rd\ obtained by
 joining $D(Q^-_1), D(Q^-_2), \dots, D(Q^-_\ell)$ (joining is detailed
 below). The \rd\ $D_h$ will thus be the desired \rd\ $D(G)$ of
 $G$. While joining these drawings, we will maintain the invariant
 that for each $j>\ell$, such that the parent of $G_j$ is some $G_i$
 with $i\in[\ell]$, the \rd\ $D_\ell$ contains the representative
 dummy vertex ($c_j$) of each \gj. Furthermore we maintain that
 $D_\ell$ minus the dummy vertices (that is $D_\ell-\cup_{j>\ell}
 c_j$) is isomorphic to $G-\cup_{j>\ell} V( G[T_j])$.

 We start by defining $D_1=D(Q^-_1)$. $D_1$ satisfies the above invariant. For $j\in [2, \cdots, h]$ we construct $D_\ell$ from $D_{\ell-1}$ and $D(Q^-_\ell)$ as follows. By the invariant, $D_{\ell-1}$ has a dummy vertex $c_\ell$ representing $G_\ell$. Let $C_\ell$ be a disk centered in the point $c_\ell$ in $D_{\ell-1}$ that meets the conditions of Lemma \ref{lemma:diskrcn}. Let $v_1,v_2,\dots,v_d$ be the neighbours of $c_\ell$ in $D_{\ell-1}$.
 Construct $D_{\ell}$ by  following steps.

 \begin{enumerate}
 \item Remove $c_\ell$ and its incident (isthmus) edges. 
 \item Scale down $D(Q^-_\ell)$. Place it inside $C_\ell$ and rotate
   it such that all the vertices of $D_\ell$ are in general position.  
 \item For each isthmus edge labelled with $(x, y, P_{xy})$ that was incident to $c_\ell$, (re)draw it as the line-segment from $x$ to $y$ if $y$ in $Q^-_{\ell}$. Otherwise, by construction, $D(Q^-_\ell)$ has a point $c_j$, $j>\ell$ and $y\in G[T_j]$. In that case, draw a line-segment between $x$ and $c_j$. By Lemma \ref{lemma:diskrcn}, the only new crossings (pairs) that this introduces are crossings between (a) a pair of (re)drawn sibling isthmus edges (that were both incident to $c_\ell$) or (b) one such isthmus edge (incident to $c_\ell$) and edges strictly inside the disk $C_\ell$ (that is, edges in $G_\ell^-$).
 \end{enumerate}

 The resulting drawing $D_\ell$ satisfies the invariant. Note that at the end of this process, when $\ell=h$, there are no more dummy vertices and each edge labelled $(x, y, P_{xy})$ in $D_h$ is an actual line-segment connecting vertex $x$ and $y$ in $G$ and thus actually represents the isthmus edge $xy$ of $G$.
 The final drawing $D_h$ is a \rd\ $D(G)$ of $G$. It remains to prove that $D(G)$ has the claimed number of crossings.
 
Before joining the drawings $D(\qim)$, $i \in [h]$, the total number of crossings in the disjoint union of all drawings was at most $c\cdot k \cdot \Delta(G)\cdot ||G||$, as argued earlier. We name this quantity the \emph{initial sum}. We now prove that joining these drawings into a drawing of $G$ does not increase the initial sum by much. Specifically, we will show that all new crossings can be charged to the edges of $G$ such that each edge is charged at most $2\cdot k\cdot \Delta(G)$ new crossings, which will complete the proof.
 
 By the construction, the new crossings must involve at least one isthmus edge. Consider such an isthmus edge $e$ labelled $(v, w, P_{vw})$, where $v\in \qim$ and $w\in Q^-_p$, $i<p$ and $w\in G[T_j]$ where $j\in U_i$ (and thus $p\in T_j$). There are four cases to consider.

\begin{description}
    \item[Case 1.] Consider first a crossing in $D(G)$ between $e$ and
      a non-isthmus edge $e'$ in \qim. That crossing is already
      accounted for in the initial sum by the crossing in $D(\qim)$
      between $e'$ and edge $vc_j$ labelled $(v,w, P_{vw})$.

    \item[Case 2.] Consider next a crossing between $e$ and an isthmus
      edge $e'$ labelled $(x, y, P_{xy})$, where $x\in \qim$ and $y\in
      G[T_r]$, with $r\in U_i$. 2a) If $r\not=j$ (so $e$ and $e'$ are
      not sibling isthmus edges),  then crossing between  $e$ and $e'$ was accounted for as
      well in the initial sum by the crossing in $D(\qim)$ between
      the edge $vc_j$ labelled  $(v,w, P_{vw})$ and the edge $xc_r$
      labelled $(x, y, P_{xy})$. 2b) If $r=j$,     it must be that $v\not=x$ as otherwise $e$ and $e'$ cannot
    cross. By construction both $w$ and $y$ are in the disk $C_r$. In
    the construction of $G$, $G[T_j]$ is added via a $(\leq
    k)$-clique-sum to $G_i$ (with parent clique $P_j$). Thus at most
    $k\cdot\Delta(G)$ (isthmus) edges cross the cycle bounding
    $C_r$. Thus $e$ can be crossed by at most $k\Delta(G)$ such edges
    $e'$. We charge these at most $k\cdot\Delta(G)$ crossings to $e$.
  
    \item[Case 3.] Consider next a crossing between $e$ and any
      edge $e'$ where both endpoints of $e'$ are in $G[T_j]$. The
      endpoints of $e'$ are thus in $Q^-_{a}$ and $Q^-_{b}$ where
      $j\leq a\leq b$. We charge the crossing to $e'$. (Think of that
      crossing being charged to $e'$ in $Q^-_{a}$). As argued above,
      at most $k\cdot\Delta(G)$ (isthmus) edges cross the cycle $C_a$
      that replaced the dummy vertex $c_a$ thus each such edge $e'$ is
      charged at most $k\cdot\Delta(G)$ new crossings.

      \item[Case 4.] Finally consider a crossing between $e$ and an isthmus
      edge $e'$ labelled $(x, y, P_{xy})$, where $x\in Q^-_f$ with $f
      <i$. Then there exists $g\in U_f$ such that $i\in T_g$. In that
      case both endpoints of $e$ are in $G[T_g]$ and we are in Case 3
      with the roles of $e$ and $e'$ reversed. Thus at most
      $k\cdot\Delta(G)$ crossings are charged to $e$.

\end{description}

 By the arguments above, each edge of $G$ is charged at most $2\cdot
 k\cdot\Delta(G)$ new crossings (at most $k\cdot\Delta(G)$ in Case 2b
 and at most $k\cdot\Delta(G)$ in Case 4)
 . Thus together with the initial sum that gives the total number of
 crossings of at most  $(c+2)\cdot k\cdot \Delta(G) \cdot ||G|| $, as claimed.
\end{proof}

\subsection{Rectilinear Drawings of Multigraphs via Graph Partitions}\label{sec:tooldraw}

As mentioned previously, in order to prove our main result, (Theorem
\ref{thm:scgrcnmain}), we will use as a main tool the theorem that we
have just proved, Theorem \ref{thm:clsrcn}. Theorem
\ref{thm:robertson2003} tells us that in order to use Theorem
\ref{thm:clsrcn}, we need to show that planar graphs and bounded
treewidth graphs are $(3,c)$-agreeable for some constant $c$. In this section, we define graph partitions and prove a lemma that will be helpful in proving that planar graphs are $(3,c)$-agreeable in Section~\ref{sec:planar} and that bounded treewidth graphs are $(k,c)$-agreeable for any $k\geq 1$ in Section~\ref{sec:tw}.

An $H$-\emph{partition} of a (multi)graph $G$ is comprised of a graph $H$ and a partition of vertices of $G$ such that
\begin{itemize}
\item each vertex of $H$ is a non-empty set of vertices of $G$ (called a \emph{bag}),
\item every vertex of $G$ is in exactly one bag of $H$, and
\item if an edge of $G$ has one endpoint in $A$ and the other endpoint in $B$ and $A$ and $B$ are distinct, then $AB$ is an edge of $H$.
\end{itemize}

The \emph{width} of a partition is the maximum number of vertices in a bag. The \emph{density} of a bag of an $H$-partition is the number of edges of $G$ with at least one endpoint in that bag. The \emph{density} of an $H$-partition is the maximum density over all bags of $H$. A bag is said to be \emph{solitary} if it contains exactly one vertex of $G$.

The proof of the following lemma is a slight modification of a similar result by \citet{WT07} and can be found in the appendix. 

\begin{lem}\label{lemma:hpartitionrcn}
 Let $K$ be a multigraph and $H$ a simple graph such that $K$ has an $H$-partition of width $w$ and density $d$. Let $X$ be the set of all vertices of $K$ that are not in solitary bags of $H$. Then we have the following.
	
	\begin{enumerate}
		\item $\RCR{K} \leq \RCR{H} \cdot w^2\cdot\Delta(K)^2 + (w-1) \cdot \sum_{v\in X} \deg_K(v)^2$
		\item if $H$ is planar, then \\
 (a) there exists a rectilinear drawing of $K$ with a most $2 \cdot d$ crossings per edge.\\
 (b) if in addition, the non-solitary bags of $H$ form an independent set in $H$, then there is a \rd\ of $K$ with at most $d$ crossings per edge.
	\end{enumerate} 
\end{lem}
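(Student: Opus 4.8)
The plan is to prove \Cref{lemma:hpartitionrcn} by starting from an optimal (or near-optimal) \rd\ of the quotient graph $H$ and ``blowing up'' each vertex of $H$ into a small cluster of points, one per vertex of $K$ in that bag, using \Cref{lemma:diskrcn} to control the new crossings. First I would fix a \rd\ of $H$ with $\RCR{H}$ crossings (for part 1) or a planar drawing of $H$ (for part 2, which exists by F\'ary--Wagner). For each bag $A\in V(H)$, \Cref{lemma:diskrcn} supplies a disk $C_A$ centered at the point representing $A$ such that placing any set of points inside $C_A$ and rerouting the edges incident to $A$ to those points produces only ``local'' new crossings: crossings between pairs of rerouted edges that are incident to $A$. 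Into $C_A$ I place $|A|$ points in general position, one for each vertex of $K$ lying in bag $A$, and I assign each edge of $K$ to a segment between the appropriate two points (an edge of $K$ inside one bag becomes a short segment inside one disk; an edge of $K$ between bags $A$ and $B$ becomes, by the partition axiom, a rerouting of the edge $AB$ of $H$, with its two ends pinned to the right points inside $C_A$ and $C_B$).

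For part 1, I would bound the crossings in three groups. (i) Crossings between two edges of $K$ that both correspond to edges of $H$ not incident to a common bag: these are in bijection with crossings of $H$, but each bag-vertex of $H$ carries at most $w$ vertices of $K$ and each such vertex has degree at most $\Delta(K)$, so each crossing of $H$ blows up into at most $w^2\Delta(K)^2$ crossings of $K$, giving the $\RCR{H}\cdot w^2\Delta(K)^2$ term. (ii) Crossings inside a single disk $C_A$: the edges drawn inside $C_A$ are exactly the edges of $K$ with at least one endpoint in $A$, routed among at most $w$ points; a standard convex-type argument (or the bound used by \citet{WT07}) shows these contribute at most $(w-1)\sum_{v\in A}\deg_K(v)^2$ over all vertices $v$ that actually sit in a non-solitary bag, which is where the set $X$ and the $(w-1)$ factor come from, since a solitary bag has $w=1$ effectively and contributes nothing. (iii) The ``mixed'' crossings that \Cref{lemma:diskrcn} rules out are exactly those already accounted for; \Cref{lemma:diskrcn}(1)--(3) guarantees no other crossings appear. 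Summing the three groups yields the claimed inequality.

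For part 2, I would instead start from a planar drawing of $H$ and argue about crossings \emph{per edge} rather than in total. An edge $e$ of $K$ that corresponds to edge $AB$ of $H$ is drawn as a curve from inside $C_A$ to inside $C_B$ following (a perturbation of) the segment $AB$, which meets no other edge of $H$; so every crossing on $e$ happens inside $C_A$ or inside $C_B$, between $e$ and another edge of $K$ incident to $A$ (resp. $B$). The number of edges of $K$ incident to bag $A$ is by definition the density of bag $A$, hence at most $d$; the same at $B$. Using \Cref{lemma:diskrcn} I can moreover route all the rerouted segments incident to a single bag so that inside the disk they pairwise cross at most once and only near the center, and a careful ordering argument shows $e$ is crossed at most $d$ times in each disk it passes through, giving $2d$ crossings per edge in general. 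If in addition the non-solitary bags form an independent set of $H$, then whenever $e$ corresponds to an edge $AB$ of $H$ at least one of $A,B$ is solitary, so the disk around it contributes no crossings at all to $e$ (a solitary bag is a single point with no blow-up), leaving only $d$ crossings per edge; edges of $K$ living inside a single (necessarily solitary) bag have no crossings. This is where the independence hypothesis is used.

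The main obstacle I expect is part 2(a): bounding crossings \emph{per edge} by $2d$ rather than merely bounding them by something like $w\cdot d$ or bounding the \emph{total}. The delicate point is that inside a disk $C_A$ one has up to $w$ vertices of $K$ and $d$ incident edges, and a naive general-position placement of the $w$ points could force a single edge to be crossed by nearly all the others. One must choose the placement of the $w$ points inside $C_A$ so that the rerouted edges, viewed as leaving the disk through prescribed boundary directions (inherited from the original edges of $H$ at $A$) and entering their internal endpoints, behave like a well-structured pseudoline-type arrangement in which each edge is crossed at most once by each ``other direction'' and at most $d$ times overall. This is exactly the slightly technical placement argument that \citet{WT07} carry out for convex drawings, adapted here to the rectilinear/disk setting via \Cref{lemma:diskrcn}; everything else is bookkeeping. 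Since the paper itself says the proof is ``a slight modification of a similar result by \citet{WT07}'' and is deferred to the appendix, I would present the blow-up construction in detail and cite their argument for the internal-disk per-edge bound.
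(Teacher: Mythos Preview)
Your overall approach---blow up an optimal \rd\ of $H$ by placing the vertices of each bag inside a tiny disk and drawing all edges of $K$ straight---is exactly what the paper does. The paper does not invoke \Cref{lemma:diskrcn} here; instead it directly chooses a single radius $\epsilon$ so that the ``tubes'' $C_\epsilon(AB)$ around distinct non-crossing edges of $H$ stay disjoint, and then places the points and counts. Your two-case split for part~1 is equivalent to the paper's Cases~1 and~2.

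The gap is your treatment of part~2. You correctly observe that if $H$ is drawn planarly, then every edge $f$ of $K$ that crosses a given edge $e$ (with endpoints in bags $A$ and $B$) must have an endpoint in $A$ or in $B$. But then you manufacture an obstacle that is not there: you worry that a single edge $e$ could be crossed many times inside one disk and conclude that a ``careful ordering argument'' or a special placement of the $w$ points is needed. It is not. Two straight-line segments cross at most once, so the number of crossings on $e$ is at most the number of edges that \emph{could} cross it, namely at most $d$ edges with an endpoint in $A$ plus at most $d$ with an endpoint in $B$, giving $2d$. Any general-position placement inside the disks works; no pseudoline arrangement, no convex trick, and no appeal to \citet{WT07} is needed for this step. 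The paper's argument for 2(a) is two sentences long.

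There is also a small slip in your 2(b) sketch: an edge of $K$ with both endpoints in one bag need not lie in a solitary bag---it can lie in a non-solitary bag $A$. That case is handled exactly as in 2(a), Case~1: such an edge is crossed only by edges with an endpoint in $A$, hence at most $d$ times. The independence hypothesis is used only for inter-bag edges, to guarantee that at least one of the two bags is solitary and hence contributes no crossings.
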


\subsection{Rectilinear Crossing Number of Simplicial Blowups of Planar Graphs}\label{sec:planar}

Theorem \ref{thm:robertson2003} tells us that in order to use Theorem \ref{thm:clsrcn}, it is enough to consider $(\leq 3)$-simplicial blowups of planar graphs with no separating triangles.
In other words, it is enough to prove that planar graphs with no  no separating triangles are $(3,c)$-agreeable for some constant $c$. The next lemma achieves that.

\begin{lem}\label{lemma:separatingtrianglesrcn}
Every planar graph $G$ that has no separating triangles is $(3,3)$-agreeable. 
\end{lem}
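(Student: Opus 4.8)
Let $G$ be a planar graph with no separating triangle, $R'$ an induced subgraph of $G$, and $R^*$ a $(\leq 3)$-simplicial blowup of $R'$. I want to show $\RCR{R^*}\leq 3\cdot\Delta(R^*)\cdot\|R^*\|$. The natural strategy is to build an $H$-partition of $R^*$ and then invoke \cref{lemma:hpartitionrcn}(2b), which gives a rectilinear drawing with at most $d$ crossings per edge whenever $H$ is planar and the non-solitary bags form an independent set; from there $\RCR{R^*}\leq d\cdot\|R^*\|$ and it remains to bound the density $d$ by $3\Delta(R^*)$.

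\textbf{Step 1: set up the partition.} First I observe that $R'$, being an induced subgraph of a planar graph, is itself planar with no separating triangle. The blowup $R^*$ adds an independent set $S$ of dummy vertices, each joined (with possibly parallel edges) to a clique of size at most $3$ in $R'$, and then some edges of the involved cliques are deleted; note a clique of size $\leq 3$ in a planar graph is a vertex, an edge, or a triangle, and a triangle that is the neighbourhood of a dummy vertex cannot be a separating triangle of $G$ (it is either a face of $R'$ or bounds only the dummy vertex). The key structural fact I would use is that the \emph{underlying simple graph} of $R^*$ — call it $\hat R$ — is planar: each dummy vertex can be placed inside a face of a planar embedding of $R'$ incident to its (at most $3$) neighbours. (For a $K_3$-neighbourhood this uses that the triangle is non-separating, hence a face; for $K_2$- or $K_1$-neighbourhoods it is immediate.) This is the step I expect to be the main obstacle: one must argue carefully that all the dummy vertices can be placed into faces \emph{simultaneously} and that the no-separating-triangle hypothesis is exactly what makes the triangle case go through, perhaps by inducting on $|S|$ and using that after inserting one dummy vertex into a face the planarity and the relevant face structure are preserved.

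\textbf{Step 2: choose $H$ and check its properties.} Given that $\hat R$ is planar, take $H:=\hat R$ itself and let the partition of $V(R^*)$ be into singleton bags (width $w=1$). Then every bag is solitary, so trivially the non-solitary bags form an independent set, $H$ is planar, and for every edge $uv$ of $R^*$ (including parallel copies) its endpoints lie in distinct bags $\{u\},\{v\}$ which are adjacent in $H=\hat R$ — so this is a valid $H$-partition. Its density $d$ is the maximum over bags $\{v\}$ of the number of edges of $R^*$ with an endpoint in $\{v\}$, i.e.\ $\deg_{R^*}(v)\leq\Delta(R^*)$. Thus $d\leq\Delta(R^*)$.

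\textbf{Step 3: conclude.} Applying \cref{lemma:hpartitionrcn}(2b) with $w=1$, $d\leq\Delta(R^*)$, and $H$ planar with all bags solitary, we obtain a rectilinear drawing of $R^*$ with at most $d\leq\Delta(R^*)$ crossings per edge, hence at most $\Delta(R^*)\cdot\|R^*\|$ crossings in total. Since $1\leq 3$, this is at most $3\cdot\Delta(R^*)\cdot\|R^*\|$, so $R^*$ satisfies the required bound, and as $R'$ and $R^*$ were arbitrary this shows $G$ is $(3,3)$-agreeable. The only real content is Step 1 — that the simple graph underlying a $(\leq 3)$-simplicial blowup of an induced subgraph of a separating-triangle-free planar graph is planar — and I would allocate essentially all the proof's effort there; everything after is a direct application of the already-established partition lemma.
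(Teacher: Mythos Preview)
Your Step~1 is where the argument breaks, and not merely for lack of care: the underlying simple graph $\hat R$ of $R^*$ need not be planar. Take $R'=K_3$ on $\{x,y,z\}$ (an induced subgraph of, say, $G=K_4$, which is planar with no separating triangles) and let $R^*$ be the blowup that adds three independent dummy vertices $s_1,s_2,s_3$, each adjacent to all of $x,y,z$. Then $\hat R$ contains $K_{3,3}$ with parts $\{x,y,z\}$ and $\{s_1,s_2,s_3\}$, so it is non-planar. More generally, whenever three or more dummy vertices share the same triangle neighbourhood --- which is precisely what happens in the proof of \cref{thm:clsrcn} when several children of a planar piece attach at the same parent clique --- your $H=\hat R$ is non-planar and \cref{lemma:hpartitionrcn}(2b) does not apply. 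The ``simultaneous placement'' you flagged as the main obstacle is thus impossible in general, not just delicate.

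The paper's proof avoids this by \emph{not} giving each dummy vertex its own bag. It adds to $H\cong G$ a single new bag $B_{xyz}$ for each triangle $\{x,y,z\}$ arising as a dummy neighbourhood, and places \emph{all} dummy vertices with that neighbourhood into $B_{xyz}$. Now $H$ gains only one new vertex per triangular face and remains planar, and the non-solitary bags $B_{xyz}$ are pairwise non-adjacent (the dummy vertices form an independent set), so (2b) applies. The price is that the density of $B_{xyz}$ can reach $\deg_Q(x)+\deg_Q(y)+\deg_Q(z)\le 3\Delta(Q)$; this is where the constant $3$ comes from, and your would-be improvement to $1$ is blocked exactly by the $K_{3,3}$ obstruction above.

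A smaller point: your claim that $R'$ inherits the no-separating-triangle property from $G$ is false in general (deleting vertices can turn a non-separating triangle into a separating one). What is true, and sufficient for the argument, is that every triangle of $R'$ is a triangle of $G$, hence bounds a face in any plane embedding of $G$, hence still bounds a face in the restricted embedding of $R'$.
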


\begin{proof}
Since every induced subgraph of $G$ is also planar and with no separating triangles, it is enough to show that every $(\leq 3)$-simplicial blowup $Q$ of $G$ has rectilinear crossing number $\RCR{Q}\leq 3 \cdot \Delta (Q)\cdot ||Q||$.

Let $S = V(Q)- V(G)$. Since adding a $1$-simplicial or $2$-simplicial
vertex to a planar graph results in a planar graph, we may assume that
each vertex in $S$ has exactly 3 neighbours in $G$. We now define an
$H$-partition of $Q$. To start, we make $H$ isomorphic to $G$ and put
each $v \in V(G)$ in the bag $B_v$ in $H$. Currently, all the bags in
$H$ are solitary bags. Since $G$, and therefore the current $H$, has
no separating triangles and since $S$ is an independent set in $Q$, we
have that for each $v \in S$, $N_{Q}(v)$ induces a face in an
embedding of $G$ and thus it is a face in the equivalent embedding of
$H$. For each vertex set $\{x,y,z\}$ in $H$ that forms such a face, we
add a bag $B_{xyz}$ adjacent to $x$, $y$ and $z$ in $H$. The resulting
graph $H$ is simple and planar. For each vertex $v \in S$ adjacent to
$x$, $y$ and $z$ in $Q$, add $v$ to the corresponding bag $B_{xyz}$ in
$H$. Thus the defined graph $H$ and the assignment of the vertices of
$Q$ to its bags defines an $H$-partition of $Q$.  

As every vertex of $Q$ in bag $B_{xyz}$ is adjacent to all vertices in
$\{x,y,z\}$, the maximum number of edges of $Q$ with an endpoint in a
non-solitary bag $B_{xyz}$ is at most
$\deg_Q(x)+\deg_Q(y)+\deg_Q(z)\leq 3\cdot\Delta (Q)$. The maximum number of
edges of $Q$ with an endpoint in a solitary bag of $H$ is clearly
$\Delta(Q)$. Thus the density of the $H$-partition is at most
$3\cdot\Delta (Q)$. Additionally, the non-solitary bags of $H$ form an
independent set in $H$ which,  by Lemma~\ref{lemma:hpartitionrcn}  (2b), implies that $Q$ has a rectilinear drawing with at most $3\cdot\Delta (Q)$ crossings per edge, giving the desired result, $\RCR{Q} \leq 3 \cdot \Delta (Q) \cdot ||Q||$. 
\end{proof}

\subsection{Rectilinear Crossing Number of Simplicial Blowups of Treewidth-\emph{k} Graphs}\label{sec:tw}

In this section, we prove that bounded treewidth graphs are
$(k,c)$-agreeable for some constants $k$ and $c$. We start with the following trivial bound applicable to all graphs.

\begin{lem}\label{lemma:multigraphsbrcn}
Every graph $G$ is $(|G|,\ |G| -1)$-agreeable. 
\end{lem}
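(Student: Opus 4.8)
The statement to prove is that every graph $G$ is $(|G|,|G|-1)$-agreeable; that is, for every induced subgraph $G'$ of $G$ and every $(\leq |G|)$-simplicial blowup $Q$ of $G'$, we have $\RCR{Q}\leq (|G|-1)\cdot\Delta(Q)\cdot\|Q\|$. Since any induced subgraph $G'$ of $G$ satisfies $|G'|\leq|G|$, it suffices to prove the cleaner statement: every multigraph $Q$ on $n$ vertices satisfies $\RCR{Q}\leq(n-1)\cdot\Delta(Q)\cdot\|Q\|$. (Every $(\leq|G|)$-simplicial blowup of $G'$ is just some multigraph on at most $|G|$ vertices, so this absorbs the blowup hypothesis entirely; the point of the lemma is only that it is a \emph{universal} bound, used as a base case.)

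The plan is to take the convex (hence rectilinear) drawing of $Q$: place the $n$ vertices of $Q$ in convex position, say on a circle, in general position, and draw every edge as a straight-line segment (parallel edges overlap, which by the conventions of Section~\ref{preliminaries} contribute no crossing-pairs). First I would bound the number of crossing-pairs by charging each crossing to one of its two edges: a crossing involves two segments with four distinct endpoints on the circle, so for a fixed edge $e=uv$, any edge crossing $e$ has exactly one endpoint among the vertices on one side of the chord $uv$ and one endpoint on the other side. Hence the number of edges crossing $e$ is at most the number of edges of $Q$ with an endpoint distinct from $u,v$ — crudely, at most $\sum_{w\neq u,v}\deg_Q(w)$, and even more crudely at most $(n-2)\cdot\Delta(Q)$, or we may simply say each of the $n-2$ other vertices contributes at most $\Delta(Q)$ incident edges. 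Summing over all edges $e$ and dividing by $2$ (each crossing counted twice) gives $\RCR{Q}\leq\tfrac12\|Q\|\cdot(n-2)\cdot\Delta(Q)\leq(n-1)\cdot\Delta(Q)\cdot\|Q\|$, with plenty of slack.

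An even shorter route, which I would probably write up instead, is the trivial one: in \emph{any} rectilinear drawing in general position, a fixed edge $e$ can cross at most $\|Q\|-1$ other edges, so $\RCR{Q}\leq\tfrac12\|Q\|(\|Q\|-1)$; combined with the handshake bound $\|Q\|\leq\tfrac12 n\Delta(Q)$ one gets $\RCR{Q}\leq\tfrac12\|Q\|(\|Q\|-1)\leq\tfrac12\|Q\|\cdot\tfrac12 n\Delta(Q)\leq(n-1)\Delta(Q)\|Q\|$ once $n\geq 2$ (and the case $n\leq 1$ is vacuous since then $\|Q\|=0$). This needs no convexity at all. Either way there is no real obstacle here: this lemma is deliberately a throwaway bound meant to serve as the base of an induction (or a terminal case) in Section~\ref{sec:tw} where bounded-treewidth graphs are handled recursively, and the only thing to be careful about is the multigraph crossing conventions — that overlapping parallel edges are not crossing-pairs — so that the stated drawing really does achieve the claimed count. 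The hard part, such as it is, is purely bookkeeping: making sure the bound is stated in terms of $\Delta(Q)$ and $\|Q\|$ of the blowup $Q$ rather than of the original $G$, which is immediate once one observes the blowup is just an arbitrary multigraph on $\leq|G|$ vertices.
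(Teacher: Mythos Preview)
Your reduction contains a genuine error: you assert that ``every $(\leq|G|)$-simplicial blowup of $G'$ is just some multigraph on at most $|G|$ vertices'', but this is false. Re-read the definition: a $(\leq k)$-simplicial blowup of $G'$ adds an \emph{arbitrary} independent set $S$ of new vertices, each attached (with multiplicities) to some clique of $G'$. The parameter $k$ bounds only the clique sizes, not $|S|$. So $|Q|=|G'|+|S|$ can be arbitrarily large, and your ``cleaner statement'' $\RCR{Q}\leq(|Q|-1)\Delta(Q)\|Q\|$ does not imply the required bound $\RCR{Q}\leq(|G|-1)\Delta(Q)\|Q\|$.

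Your trivial route is easily repaired, though. The key structural fact you are missing is that $S$ is independent, so \emph{every edge of $Q$ has at least one endpoint in $G'$}. Summing degrees over $V(G')$ gives $\|Q\|\leq\sum_{v\in V(G')}\deg_Q(v)\leq|G'|\cdot\Delta(Q)\leq|G|\cdot\Delta(Q)$. Plugging this into your universal bound $\RCR{Q}\leq\tfrac12\|Q\|(\|Q\|-1)$ yields $\RCR{Q}\leq\tfrac12|G|\Delta(Q)\|Q\|\leq(|G|-1)\Delta(Q)\|Q\|$ for $|G|\geq 2$, and the case $|G|=1$ is a multigraph star with $\RCR{Q}=0$. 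So once corrected, your argument is actually shorter than the paper's.

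For comparison, the paper does not use a crude crossing bound at all. It builds an explicit planar $H$-partition of $Q$: one solitary bag for a chosen vertex of $G'$, one non-solitary bag containing the remaining $|G'|-1$ vertices of $G'$, and one solitary bag for each vertex of $S$; the resulting $H$ is $K_2$ with $|S|$ extra degree-two vertices (hence planar), and Lemma~\ref{lemma:hpartitionrcn}(2b) gives a drawing with at most $(|G|-1)\Delta(Q)$ crossings per edge. The paper's approach thus fits into the same $H$-partition machinery used elsewhere, whereas your (repaired) argument is more self-contained but less thematic.
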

\begin{proof}

If $|G|=1$, the statement is trivial since every $(\leq 1)$-simplicial blowup of $G$ is a star thus the crossing number of every such blowup is zero. Assume now that $|G|\geq 2$. Since every induced subgraph of $G$ is also in the class of all graphs, it is enough to show that every $(\leq|G|)$-simplicial blowup $Q$ of $G$ has rectilinear crossing number $\RCR{Q}\leq (|G|-1) \cdot \Delta (Q)\cdot ||Q||$.

Let $S = V(Q) - V(G)$. We build an $H$-partition of $Q$ as
follows. Start with $H:=K_2$ with $V(H)=\{v,w\}$. Place one vertex of
$G$ in $B_v$ and all the remaining vertices of $G$ in $B_w$. Add a
independent set of $|S|$ of vertices to $H$ and make each connected to
$v$ and $w$. It is simple to verify that $H$ is a simple planar
graph.  
 Place each vertex of $S$ in a new vertex (bag) of $H$. That defines
 an $H$-partition of $Q$ where $H$ is a simple planar graph and where
 all bags of $H$ are solitary except for one bag, that is $B_w$. 
Trivially, that one non-solitary bag forms an independent set in
$H$. Since $H$ is planar and since the density of $H$ is at most
$(|G|-1)\cdot\Delta(Q)$, we obtain the desired result, $\RCR{Q} \leq
(|G| - 1) \cdot \Delta(Q)\cdot ||Q||$  by Lemma~\ref{lemma:hpartitionrcn} (2b).
\end{proof}

The following result, obtained by setting $|G|=t$, is an immediate corollary of Lemma \ref{lemma:multigraphsbrcn}.

\begin{cor}\label{corollary:completesbrcn}
The complete graph, $K_t$, is $(t,\ t-1)$-agreeable.
\end{cor}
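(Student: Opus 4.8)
\textbf{Proof plan for Corollary~\ref{corollary:completesbrcn}.}
The plan is simply to invoke Lemma~\ref{lemma:multigraphsbrcn} with the graph $G$ taken to be $K_t$. That lemma states that every graph $G$ is $(|G|,\,|G|-1)$-agreeable, and since $|K_t|=t$, substituting directly yields that $K_t$ is $(t,\,t-1)$-agreeable. So the entire content of the corollary is a one-line specialization; there is no genuine obstacle, and the only thing to double-check is that the numerology matches ($|G|=t$ gives the first parameter $t$ and the second parameter $t-1$, exactly as claimed).

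It is perhaps worth a sentence of commentary on \emph{why} this specialization is the version we want to record separately: in the eventual application via Theorem~\ref{thm:clsrcn}, the pieces of bounded treewidth will be handled by decomposing them further (a treewidth-$k$ graph is built by clique-sums of copies of $K_{k+1}$, or of graphs on at most $k+1$ vertices), and at the base of that recursion we need the agreeability of small complete graphs. Phrasing the base case as "$K_t$ is $(t,t-1)$-agreeable" is the cleanest hypothesis to feed into the clique-sum machinery. The main work still lies ahead — namely showing that a treewidth-$k$ graph itself is $(k+1,c)$-agreeable (or reducing it to Corollary~\ref{corollary:completesbrcn} via another clique-sum argument) — but that is the subject of the remainder of Section~\ref{sec:tw}, not of this corollary.

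Concretely, the proof I would write is: "Apply Lemma~\ref{lemma:multigraphsbrcn} to the graph $G := K_t$. Since $|K_t| = t$, the lemma gives that $K_t$ is $(t,\,t-1)$-agreeable, as desired." Nothing further is needed, since Lemma~\ref{lemma:multigraphsbrcn} has already done all the drawing-theoretic work (building the planar $H$-partition with a single non-solitary bag and applying Lemma~\ref{lemma:hpartitionrcn}(2b)). If one wanted to be slightly more self-contained one could note that every $(\leq t)$-simplicial blowup $Q$ of an induced subgraph of $K_t$ is in particular a $(\leq |G'|)$-simplicial blowup of that induced subgraph $G'$, so the hypothesis of Lemma~\ref{lemma:multigraphsbrcn} applies uniformly across all induced subgraphs; but this is already subsumed in the statement of that lemma.
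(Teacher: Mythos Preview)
Your proposal is correct and matches the paper's own justification exactly: the paper simply states that the corollary follows by setting $|G|=t$ in Lemma~\ref{lemma:multigraphsbrcn}, which is precisely the one-line specialization you give. The extra commentary you include (about the role in the treewidth argument and the induced-subgraph remark) is accurate but not needed for the proof itself.
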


We are now ready to prove that every bounded treewidth graph $G$ has $\RCR{G} \in\Oh{\Delta(G)\cdot |G|}$.

	\begin{thm}\label{thm:twrcn}
For $k\geq 1$, let $\mathcal G$ denote a family of graphs of treewidth at most $k$. For every graph $G\in {\mathcal G}\,$, $\RCR{G} \leq k\cdot (k+2) \cdot \Delta (G) \cdot ||G||$.
	\end{thm}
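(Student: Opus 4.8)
The plan is to reduce Theorem~\ref{thm:twrcn} to the already-proved clique-sum machinery (Theorem~\ref{thm:clsrcn}) together with Corollary~\ref{corollary:completesbrcn}. The key observation is that a graph $G$ of treewidth at most $k$ can be obtained by $(\leq k+1)$-clique-sums of copies of the complete graph $K_{k+1}$: take a tree decomposition of width at most $k$, make every bag into a clique, and read the tree decomposition as an instruction for clique-summing these cliques along the adhesion sets. Strictly, bags may have size less than $k+1$, but one can either pad every bag up to size exactly $k+1$ (adding dummy vertices, which only adds to $|G|$ by a constant factor and does not increase $\Delta$ by more than an additive $O(k)$) or, more cleanly, observe that the class of graphs of treewidth at most $k$ is closed under the operations in Theorem~\ref{thm:clsrcn} applied to pieces $G_i = K_{k+1}$ (or smaller complete graphs), since each such $K_{j}$ with $j\le k+1$ has treewidth at most $k$. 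So I would first state precisely that every treewidth-$\le k$ graph $G$ arises as a $(\leq k+1)$-clique-sum of complete graphs each on at most $k+1$ vertices.

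Next I would invoke Corollary~\ref{corollary:completesbrcn}: $K_{k+1}$ (and every $K_j$ with $j \le k+1$) is $(k+1,k)$-agreeable, i.e. $(k+1, (k+1)-1)$-agreeable. Then I would apply Theorem~\ref{thm:clsrcn} with the constant $c = k$ and the clique-sum parameter taken to be $k+1$ (note every piece is $(k+1,k)$-agreeable, hence the hypothesis of Theorem~\ref{thm:clsrcn} is met with these parameters), concluding that any graph $G$ obtained by $(\le k+1)$-clique-sums of such pieces satisfies
\[
\RCR{G} \;\leq\; (k+1)\cdot(k+2)\cdot \Delta(G)\cdot \|G\|.
\]
That is essentially the claimed bound, modulo the mismatch between $k(k+2)$ in the statement and $(k+1)(k+2)$ here; I suspect the intended statement uses treewidth $k$ meaning bags of size at most $k$ (so clique-sums of $K_k$, giving $(k,k-1)$-agreeable pieces and clique-sum parameter $k$), which yields exactly $k\cdot(k+2)\cdot\Delta(G)\cdot\|G\|$ via Theorem~\ref{thm:clsrcn} with $c=k-1$ and parameter $k$. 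So the clean argument is: take a tree decomposition, realize $G$ as a $(\le k)$-clique-sum (using the convention that treewidth $k$ permits bags of up to $k$ vertices in this normalization, or pad appropriately), each piece is a complete graph on at most $k$ vertices which is $(k, k-1)$-agreeable by Corollary~\ref{corollary:completesbrcn}, and Theorem~\ref{thm:clsrcn} gives $\RCR{G}\le k\cdot((k-1)+2)\cdot\Delta(G)\cdot\|G\| = k(k+2)\Delta(G)\|G\|$.

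The main obstacle is bookkeeping around the exact relationship between "treewidth at most $k$" and the clique-sum decomposition: one must be careful that bags of a tree decomposition need not induce cliques, so $G$ is a \emph{spanning subgraph} of a chordal graph that is a genuine clique-sum of complete graphs, and hence $G$ itself is obtained from that chordal graph by the edge-deletion step that clique-sums already permit. Concretely, given a tree decomposition $(B_x : x\in V(T))$ of width $\le k$, I would process $T$ from an arbitrary root downward: the root bag becomes $K_{|B_x|}$, and each child bag $B_y$ is clique-summed onto its parent along $B_x\cap B_y$ (an adhesion set of size at most the bag size), contributing the vertices $B_y \setminus B_x$ as a new complete-graph piece; the standard properties of tree decompositions guarantee these adhesion sets are cliques in the union and that every edge of $G$ lies inside some bag, so after deleting the non-edges of $G$ from each clique we recover exactly $G$. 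I expect this realization step to be the only place requiring genuine care; once it is in hand, the theorem follows by a one-line application of Theorem~\ref{thm:clsrcn} and Corollary~\ref{corollary:completesbrcn}, and it simultaneously establishes that bounded-treewidth graphs are $(k,c)$-agreeable as promised at the start of this subsection.
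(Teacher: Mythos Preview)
Your approach is exactly the paper's: realise $G$ as clique-sums of small complete graphs, invoke Corollary~\ref{corollary:completesbrcn} for the agreeability of the pieces, and apply Theorem~\ref{thm:clsrcn}. The paper's proof is three lines and does precisely this.

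Where you go astray is the parameter bookkeeping, and your proposed fix (reinterpreting ``treewidth $k$'' to mean bags of size at most $k$) is not correct---the paper uses the standard convention, bags of size at most $k+1$. Two observations resolve the constant exactly. First, a graph of treewidth at most $k$ is obtained by $(\leq k)$-clique-sums (not $(\leq k{+}1)$) of complete graphs on at most $k+1$ vertices: in a reduced tree decomposition no bag is contained in an adjacent bag, so two adjacent bags of size $\leq k+1$ meet in at most $k$ vertices; hence the adhesion, and therefore the clique-sum parameter, is at most $k$. Second, Corollary~\ref{corollary:completesbrcn} makes each piece $K_{\leq k+1}$ a $(k{+}1,k)$-agreeable graph, and this immediately yields $(k,k)$-agreeability, since every $(\leq k)$-simplicial blowup is in particular a $(\leq k{+}1)$-simplicial blowup. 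Feeding clique-sum parameter $k$ and $c=k$ into Theorem~\ref{thm:clsrcn} now gives $\RCR{G}\leq k\cdot(k+2)\cdot\Delta(G)\cdot\|G\|$ on the nose. (Incidentally, your alternative computation has an arithmetic slip: $k\cdot((k-1)+2)=k(k+1)$, not $k(k+2)$.)
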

	\begin{proof}
 It is well known (see \cite{BODLAENDER19981} for example) that $G$ can be obtained by $(\leq k)$-clique-sums
 on graphs $G_1, G_2 \cdots$, where each $G_i$, is the complete graph
 on at most $k+1$ vertices. 
Corollary \ref{corollary:completesbrcn} implies that, for each $i$, $i\in[h]$, \gi\ is $(k+1,\ k)$-agreeable and thus $(k,\ k)$-agreeable. This fulfills the sole condition of Theorem~\ref{thm:clsrcn}. Thus $\RCR{G} \leq k\cdot (k+2) \cdot \Delta (G) \cdot ||G||$.
\end{proof}

        Theorem \ref{thm:twrcn} gives $O(\Delta (G) \cdot |G|$) bound for
         the \rcn\ of bounded treewidth graphs $G$. As discussed in the
        introduction, the bound is optimal and it  improves on the
        previously known bounds (see Theorems \ref{wood2007} and \ref{tw-rec}).

Since every $k$-simplicial blowup of any graph of treewidth at most $k$ itself has treewidth at most $k$, we get the following immediate corollary of Theorem \ref{thm:twrcn}.

\begin{lem}\label{lemma:twsbrcn}
For every positive integer $k$, every graph of treewidth at most $k$
is $(k,\ k\cdot (k+2))$-agreeable. 
\end{lem}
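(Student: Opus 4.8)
The plan is to derive Lemma~\ref{lemma:twsbrcn} directly from Theorem~\ref{thm:twrcn} together with the simple observation that the class of graphs of treewidth at most $k$ is closed under taking $(\le k)$-simplicial blowups. Recall that a graph $R$ is $(k,c)$-agreeable if for every induced subgraph $R'$ of $R$ and every $(\le k)$-simplicial blowup $R^*$ of $R'$ we have $\RCR{R^*}\le c\cdot\Delta(R^*)\cdot\|R^*\|$. So, fixing a positive integer $k$ and a graph $G$ of treewidth at most $k$, I must show that every $(\le k)$-simplicial blowup $R^*$ of every induced subgraph $R'$ of $G$ satisfies $\RCR{R^*}\le k\cdot(k+2)\cdot\Delta(R^*)\cdot\|R^*\|$.

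First I would note that every induced subgraph $R'$ of $G$ again has treewidth at most $k$, since treewidth is minor-monotone (restricting a tree decomposition of $G$ to the bags intersected with $V(R')$ yields a tree decomposition of $R'$ of no larger width). Next, the key step: a $(\le k)$-simplicial blowup $R^*$ of $R'$ has treewidth at most $k$ as well, provided $k\ge 1$; this is the content of the parenthetical remark preceding the lemma (``every $k$-simplicial blowup of any graph of treewidth at most $k$ itself has treewidth at most $k$''). Concretely, take a tree decomposition of $R'$ of width at most $k$. Each vertex $u$ added in the blowup is adjacent to the vertices of some clique $C$ of $R'$ of size at most $k$; by the standard property of tree decompositions, some bag $B_x$ contains all of $C$, and since $|C|\le k$ while $|B_x|\le k+1$, we may attach a new bag $B_x\cup\{u\}$ (of size at most $k+1$) as a leaf adjacent to $x$ in the tree. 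Doing this for all blowup vertices keeps the width at most $k$, and the parallel edges added in the blowup and the edge deletions from the cliques do not affect treewidth (treewidth ignores edge multiplicities and only decreases under edge deletion). Hence $R^*$, viewed as a multigraph, has an underlying simple graph of treewidth at most $k$.

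Finally, since $R^*$ is a (multi)graph whose underlying simple graph has treewidth at most $k$, Theorem~\ref{thm:twrcn} applies — one should just double-check that the proof of Theorem~\ref{thm:twrcn} goes through for multigraphs, which it does, because the clique-sum decomposition into complete graphs $K_{\le k+1}$ and the bound from Theorem~\ref{thm:clsrcn} are both formulated for multigraphs and the isthmus/parallel-edge bookkeeping in Theorem~\ref{thm:clsrcn} is insensitive to edge multiplicities. Applying Theorem~\ref{thm:twrcn} to $R^*$ gives $\RCR{R^*}\le k\cdot(k+2)\cdot\Delta(R^*)\cdot\|R^*\|$, which is exactly the $(k,\,k\cdot(k+2))$-agreeability condition. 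I do not anticipate a serious obstacle here: the only point needing genuine care is the treewidth-preservation argument for simplicial blowups (handling the clique-to-bag containment and making sure the new bags stay within size $k+1$), and secondarily confirming that Theorem~\ref{thm:twrcn} and the tools feeding into it are stated for multigraphs rather than only simple graphs — both are routine given the machinery already developed in the preceding subsections.
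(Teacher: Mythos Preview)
Your proposal is correct and follows essentially the same route as the paper: the paper derives the lemma as an ``immediate corollary of Theorem~\ref{thm:twrcn}'' via the single observation that a $(\le k)$-simplicial blowup of a treewidth-$\le k$ graph again has treewidth at most $k$, and you spell out exactly that argument (closure under induced subgraphs, the leaf-bag construction for blowup vertices, then invoke Theorem~\ref{thm:twrcn}). You are in fact more careful than the paper in flagging that $R^*$ is a multigraph while Theorem~\ref{thm:twrcn} is stated for simple graphs; the paper silently applies it, and your remark that the machinery of Theorem~\ref{thm:clsrcn} is insensitive to the extra parallel edges is the right way to close that small gap.
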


\subsection{Proof of Theorem~\ref{thm:scgrcnmain}}\label{section:mainproof}

Recall the statement of Theorem~\ref{thm:scgrcnmain}. 
Lemma~\ref{lemma:separatingtrianglesrcn} states that every planar
piece $G_i$ of the decomposition is $(3,3)$-agreeable. Consider the
non-planar pieces of the decomposition. By
Theorem~\ref{thm:robertson2003} of Robertson and Seymour, they have
treewidth at most $t$, where $t\geq 3$, as graphs of treewidth at most
2 are planar \cite{BODLAENDER19981}. Lemma~\ref{lemma:twsbrcn} states
that every treewidth at most $t$ graph is $(t,\ t\cdot
(t+2))$-agreeable. Since every non-planar piece of the decomposition
has treewidth at most $t$ with $t\geq 3$, these pieces are $(3,\
t\cdot (t+2))$-agreeable. Since $t\geq 1$ for all pieces of the
decomposition, if we choose $c:=t\cdot (t+2)$ all the pieces of the
decomposition are $(3,c)$-agreeable. Theorem \ref{thm:clsrcn} (and
Theorem~\ref{thm:robertson2003} by Robertson and Seymour) then implies
that $G$ has \rcn\ at most $3\cdot (t\cdot
(t+2)+2)\cdot\Delta(G)\cdot||G||=3\cdot (t^2+
2t+2)\cdot\Delta(G)\cdot||G||$, as claimed.  

\section{Conclusion and Open Problems}\label{conclusion}

 In this article, we proved that $n$-vertex bounded degree
 single-crossing minor-free graphs have $\Oh{n}$ \rcn.  More strongly
 we proved that for any single-crossing graph $X$,  every  $n$-vertex $X$-minor-free graph $G$ has \rcn\ at most  $\Oh{\Delta(G)\cdot n}$ and the bound is best possible.  The result represents a strong improvement over the previous state of the art on the \rcns\ of minor-closed families of graphs,  as argued in the introduction. 

 The ultimate goal for future work would be to obtain the above result for any fixed graph $X$. For such families $\Oh{f(\Delta)\cdot n}$ bound is not known for any function $f$. In fact, the best known bound on the rectilinear crossing number of bounded degree proper minor-closed families is $\Oh{n \log n}$ \cite{ShahrokhiSSV03}.

 In order to attempt to prove an $\Oh{f(\Delta)\cdot n}$ bound, that is, a linear \rcn\ for all proper minor-closed families of graphs of bounded degree, Robertson and Seymour's graph minors theory tells us that one should provide two ingredients. The first ingredient is to prove the result for $k$-almost embeddable graphs. The second is to be able to handle clique-sums of those. Proving the result for almost embeddable graphs entails proving it for bounded Euler genus graphs, that is, proving a result akin to Theorem \ref{thm:pachtoth2006} by \citet{pach2006crossing} but with the crossing number replaced by the \rcn. However, such a result is not even known for all bounded degree toroidal graphs.

The second ingredient however, handling the clique-sums of \rds, can
be achieved by our Theorem \ref{thm:clsrcn}. In particular, one can
change a definition of $(k,c)$-agreeable to $(k,f(\Delta))$-agreeable
as to allow for any function $f(\Delta)$ and not just the linear
function, $c\cdot \Delta$, and then  recall that the proof of Theorem
\ref{thm:clsrcn} in fact shows that the \rds\ of
$(k,f(\Delta))$-agreeable graphs can be joined by $(\leq k)$-clique
sums into a \rd\ of the resulting graph $G$ while only increasing the
total number of crossings by $2\cdot k \cdot
\Delta(G)\cdot||G||$. Suppose, in the future, one could provide the
first ingredient above, that is, show that almost embeddable
$n$-vertex graphs $G$ have linear \rcn, that is $\RCR{G}\leq
g(\Delta)\cdot n$  for some function $g$.  In that case Lemma
\ref{future}, stated and proved in the appendix, would imply that simplicial blowups of almost embeddable graphs are  $(k,f(\Delta))$--agreeable with $f(\Delta)\in \Oh{\Delta^4}\cdot g(\Delta)$. That and 
 Theorem \ref{thm:clsrcn}, as discussed in this paragraph, would imply that all proper minor-closed families of graphs of bounded degree have linear \rcn.

\newpage

{\fontsize{10pt}{11pt}\selectfont
\bibliographystyle{DavidNatbibStyle}
\bibliography{icalp,DavidBibliography}
}
\section*{Appendix}

\begin{proof}[Proof of Lemma \ref{lemma:hpartitionrcn}]  We start with
  a rectilinear drawing $D(H)$ of $H$ with $\RCR{H}$
  crossings. Consider any vertex (bag) $B$ of $H$. Let
  $C_{\epsilon}(B)$ be a disk of radius $\epsilon >0$ centered at $B$
  in $D(H)$. For each edge $AB$ of $H$, let $C_{\epsilon}(AB)$ be the
  region defined by the union of all the line-segments with one endpoint in $C_{\epsilon}(A)$ and the other in $C_{\epsilon}(B)$. Note that there exists an $\epsilon$ small enough such that all of the following conditions are met:
\begin{itemize}
  \item $C_{\epsilon}(A) \cap C_{\epsilon}(B) = \emptyset$ for all distinct bags $A$ and $B$ of $H$;
  \item $C_{\epsilon}(AB) \cap C_{\epsilon}(PQ) = \emptyset$ for every pair of edges $AB$ and $PQ$ of $H$ that have no endpoints in common and do not cross in $D(H)$;
  \item $C_{\epsilon}(AB) \cap C_{\epsilon}(Q)= \emptyset$ for every triple of distinct bags $A, B, Q$ of $H$ where $AB$ is an edge of $H$;
   \item For each crossing-pair of edges $AB$ and $PQ$ in $D(H)$, $C_{\epsilon}(AB) \cap C_{\epsilon}(PQ)$ is non-empty. We call that region, $C_{\epsilon}(AB) \cap C_{\epsilon}(PQ)$, of the plane \emph{busy region} of pair $AB$ and $PQ$. Finally, the busy regions of all distinct pair of edges are pairwise disjoint. 
\end{itemize}

For each vertex $v$ of $K$ such that $v$ is in a bag $B$ of $H$, draw $v$ as a point in $C_{\epsilon}(B)$ such that the final set of points representing $V(K)$ is in general position. Draw every edge of $K$ straight. This defines a \rd\ $D(K)$ of $K$, since no edge in $D(K)$ contains a vertex other than its own endpoints and no three edges of $D(K)$ cross at one point. 
  
  We first prove that the number of crossings in $D(K)$ is at most $\RCR{H} \cdot w^2\cdot\Delta(K)^2 + (w-1) \cdot \sum_{v\in X} \deg_K(v)^2$ which will prove the first part of the theorem.  Consider two crossing edges $e$ and $f$ in $D(K)$. There are two cases to consider (based on two types of crossings that can occur in $D(K)$).
  \begin{itemize}
  \item Case 1: there is bag $B$ of $H$ that has at least one endpoint of $e$ and at least one endpoint of $f$. Order all the vertices of $B$ = $\{v_1, v_2, \cdots, v_\ell\}$, $l \leq w$ such that $\deg_{K}(v_1) \leq \cdots \leq \deg_{K}(v_\ell)$. Let $v_i$ be an endpoint of $e$ and $v_j$ and endpoint of $f$, $\,i < j$. We charge the crossing between $e$ and $f$ to $v_j$. 
  

  Thus the number of crossings charged to $v_j$ is at most 
				\[ \sum_{i < j} \deg_{K}(v_i) \cdot \deg_{K}(v_j) \leq \sum_{i < j} \deg_{K}(v_j)^2\leq (\ell-1)\cdot \deg_{K}(v_j)^2 \leq (w-1)\cdot \deg_{K}(v_j)^2 \]
				The vertices in the solitary bags of $H$ are charged $0$ crossings, rendering the total number of crossings in Case 1 is at most $(w-1) \sum_{v \in X} \deg_K(v) ^2$. 
				
\item Case 2: there is no bag of $H$ that has both an endpoint of $e$ and an endpoint of $f$. This implies that four endpoints of $e$ and $f$ are in four distinct bags, $A,B,P,Q$ of $H$. Let $e\in C_\epsilon (AB)$ and $f \in C_\epsilon (PQ)$. Since $e$ and $f$ cross, their crossing point must be the busy region of  $AB$ and $PQ$. Denote that region by $R$. There are at most $\Delta(K)\cdot w$ edges of $K$ drawn inside $C_\epsilon (AB)$ that intersect $R$ and at most $\Delta(K)\cdot w$ edges of $K$ drawn inside $C_\epsilon (PQ)$ that intersect $R$. We charge the crossings between these pairs of edges to the busy region $R$. 
%
Thus the number of crossings charged to $R$ is at most $w\Delta(K) \cdot w\Delta(K) = w^2 \cdot \Delta(K)^2$. Since $D(H)$ has $\RCR{H}$ crossings, there are exactly $\RCR{H}$ busy regions determined by crossing edges in $D(H)$. Thus the total number of crossings  in Case 2 is at most $\RCR{H} \cdot w^2 \cdot \Delta (K)^2$.
\end{itemize}
			
Thus $\RCR{K} \leq \RCR{H} \cdot w^2\cdot\Delta(K)^2 + (w-1) \cdot \sum_{v\in X} \deg_K(v)^2$ as stated in part 1.

 We now prove the second part of the theorem. In this case, $H$ is planar. By the Fáry-Wagner theorem \cite{wagner1936bemerkungen, istvan1948straight}, there is a \rd\ $D(H)$ of $H$ with no crossings. Starting with such crossing-free drawing $D(H)$, we produce a \rd\ $D(K)$ of $K$ using the algorithm described above. Let $e$ be an edge of $K$ with an endpoint in some bag $A$ of $H$. We now prove that the number of crossings on $e$ in $D(K)$ is at most $2d$ as claimed in part 2a. There are two cases to consider:
			\begin{itemize}
				\item Case 1: both endpoints of $e$ are in $A$. Then, $e$ is only crossed by the edges that have at least one endpoint in $A$. As there are at most $d$ such edges, there is at most $d$ crossings on $e$ in $D(K)$.
				\item Case 2: the other endpoint of $e$ is in a bag $B$ of $K$ distinct from $A$. Then, since $D(H)$ is crossing-free, $e$ can only be crossed by the edges that have at least one endpoint in $A$ or in $B$. There is at most $2d$ such edges, thus there is at most $2d$ crossings on $e$ in $D(K)$.
			\end{itemize}
			
			In either case, $e$ is crossed by at most $2d$ edges in $D(K)$ as required by part 2a.
			
Finally, consider the case when the non-solitary bags of $H$ form an independent set in $H$. Let $e$ be an edge of $K$. If two endpoints of $e$ are in two distinct solitary bags of $H$ then no edge of $K$ crosses $e$ since $D(H)$ is crossing-free. Therefore, in that case, trivially, there are at most $d$ crossings on $e$ in $D(K)$. Thus we may assume that at least one endpoint of $e$ is in a non-solitary bag of $H$. Let $A$ denote that bag. If the other endpoint of $e$ is also in $A$, the result follows from Case 1 above. Therefore, we may assume that the other endpoint, $v$, of $e$ is in a bag $B$ of $H$ distinct from $A$. $B$ is then a solitary bag (by the independent set assumption). Since the edges incident to the same vertex ($v$ in this case) cannot cross, the only edges that can cross $e$ are those with an endpoint in $A$. There is at most $d$ edges with endpoints in $A$ and thus there are at most $d$ crossings on $e$ in $D(K)$.
\end{proof}

\begin{lem}\label{future}
For every graph $G$ and every $(\leq k)$-simplicial blowup $Q$ of $G$, $\RCR{Q} \leq (\Delta(Q)+1)^2\cdot \RCR{G} + \Delta(Q)^4\cdot ||Q|| $.
\end{lem}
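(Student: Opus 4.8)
The plan is to build a rectilinear drawing of $Q$ by starting from an \emph{optimal} rectilinear drawing $D$ of $G$ (so $D$ has $\RCR{G}$ crossings) and shrinking each vertex of $G$ to a tiny cluster of points. First I would make two reductions. Parallel edges of a multigraph are drawn as one overlapping segment and never form a crossing-pair, so I may pass to the underlying simple graph of $Q$, which is again a $(\le k)$-simplicial blowup of $G$ with no larger maximum degree and no more edges; and writing $\Delta:=\Delta(Q)$, if $\Delta\le 1$ then $Q$ is a matching, $\RCR{Q}=0$, and the bound is trivial, so I may assume $\Delta\ge 2$. Let $S:=V(Q)\setminus V(G)$, an independent set, with each $u\in S$ joined to a clique $C_u$ of $G$ with $1\le|C_u|\le k$. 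For each $u$ I fix a \emph{host} $h(u)\in C_u$ and set $S_v:=\{u\in S:h(u)=v\}$; since every vertex of $S_v$ is adjacent to $v$ in $Q$, we have $|S_v|\le\deg_Q(v)\le\Delta$.

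For the construction, around each $v\in V(G)$ take a disk $C_v$ of small positive radius, the radii chosen (by the same continuity bookkeeping as in the proofs of Lemma~\ref{lemma:diskrcn} and Lemma~\ref{lemma:hpartitionrcn}) so that the disks are pairwise disjoint, the only part of $D$ meeting $C_v$ is $v$ together with the edges of $G$ at $v$, and every new edge drawn close to an old edge stays close to it. Inside $C_v$ I place the points $B_v:=\{v\}\cup\{\hat u:u\in S_v\}$, keeping the whole point set in general position. I leave every edge of $G$ that survives in $Q$ exactly where it was in $D$, draw each host edge $u\,h(u)$ as a tiny segment inside $C_{h(u)}$, and draw each remaining edge $uw$ (with $w\in C_u\setminus\{h(u)\}$) as the segment from $\hat u$ to $w$; if some clique edges of $G$ were deleted when forming $Q$, I simply omit them, which only removes crossings. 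For a $G$-edge $f=ab$ let its \emph{bundle} $\beta_f$ be the set of drawn $Q$-edges running between $C_a$ and $C_b$, that is $\beta_f=(\{ab\}\cap E(Q))\cup\{\hat u b:u\in S_a,\ b\in C_u\}\cup\{a\hat u:u\in S_b,\ a\in C_u\}$. The last two sets are disjoint and both lie in $\{\hat u:u\in S\cap N_Q(a)\cap N_Q(b)\}$, so
\[ |\beta_f|\ \le\ 1+|N_Q(a)\cap N_Q(b)|\ \le\ \Delta+1 . \]
This inequality is the crux of the whole argument: it is exactly what forces the coefficient in front of $\RCR{G}$ to be $(\Delta+1)^2$ and not something larger.

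Next I would classify the crossings of the resulting drawing $D'$. For small enough radii, every crossing of $D'$ is of one of three types: (a) between an edge of $\beta_f$ and an edge of $\beta_{f'}$ for two distinct $G$-edges $f,f'$ that already cross in $D$; (b) a crossing whose point lies inside some disk $C_v$, necessarily between two edges incident to $B_v$; or (c) a crossing between two edges of the same bundle $\beta_f$. Type (a) contributes at most $|\beta_f|\,|\beta_{f'}|\le(\Delta+1)^2$ crossings per crossing of $D$, hence at most $(\Delta+1)^2\RCR{G}$ in total. For type (b), at most $|B_v|\cdot\Delta\le(\Delta+1)\Delta$ edges of $Q$ are incident to $B_v$; fixing a linear order on $E(Q)$ and charging each such crossing to its smaller edge, and using that every edge of $Q$ is incident to at most two clusters, each edge is charged at most $2(\Delta+1)\Delta$ crossings, for a total of at most $2\Delta(\Delta+1)\,\E{Q}$. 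For type (c), inside $\beta_{ab}$ the edges whose $C_a$-end is $a$ pairwise share $a$ and the edges $\hat u b$ pairwise share $b$, so a same-bundle crossing is always between an edge $a\hat u'$ (with $u'\in S_b$) and an edge $\hat u b$ (with $u\in S_a$); charging it to $a\hat u'$, which occurs as such in only one bundle, where it meets at most $|S_a\cap N_Q(b)|\le\Delta$ edges of the other kind, each edge of $Q$ is charged at most $\Delta$ crossings, for a total of at most $\Delta\,\E{Q}$. Since $2\Delta^2+3\Delta\le\Delta^4$ for $\Delta\ge 2$, adding these up gives
\[ \RCR{Q}\ \le\ (\Delta+1)^2\,\RCR{G}+\bigl(2\Delta^2+3\Delta\bigr)\,\E{Q}\ \le\ (\Delta+1)^2\,\RCR{G}+\Delta^4\,\E{Q}, \]
and passing back to the original multigraph $Q$ (which only enlarges $\Delta(Q)$ and $\E{Q}$) completes the proof.

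The step I expect to be the main obstacle is pinning the coefficients down exactly rather than up to constants. A naive routing blows each crossing of $D$ up into roughly $(2\Delta+1)^2$ new crossings; it is only by choosing the hosts carefully and observing that a bundle is confined to $N_Q(a)\cap N_Q(b)$ and splits into two \emph{disjoint} halves that one gets $|\beta_f|\le\Delta+1$ and hence the coefficient $(\Delta+1)^2$. Symmetrically, the ``local'' crossings near clusters must be charged to the edges of $Q$ rather than counted cluster by cluster: a per-cluster count would yield a $\Delta^4\,|G|$ term, not the required $\Delta^4\,\E{Q}$ one. Verifying, for sufficiently small radii, that the only crossings of $D'$ are the three listed types is routine but relies on the same continuity argument used in the proofs of Lemma~\ref{lemma:diskrcn} and Lemma~\ref{lemma:hpartitionrcn}.
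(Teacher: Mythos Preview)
Your approach is the paper's: assign each $u\in S$ a host $h(u)\in C_u$, which is exactly the $H$-partition with $H\cong G$ (and width $\le\Delta(Q)+1$) that the paper sets up; the paper then invokes Lemma~\ref{lemma:hpartitionrcn} as a black box, while you redo its construction by hand and sharpen the per-crossing estimate via the bundle bound $|\beta_{ab}|\le 1+|N_Q(a)\cap N_Q(b)|\le\Delta+1$. That sharpening is correct \emph{for simple $Q$} and is what forces the coefficient $(\Delta+1)^2$.

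The genuine gap is the reduction to the underlying simple graph $Q'$ together with the final ``passing back.'' You are right that two parallel edges never form a crossing-pair with each other, but a stack of $m$ overlapping parallel edges crossing a transversal edge $f$ produces $m$ crossing-pairs, not one. Hence a drawing of $Q'$ with $N$ crossings yields, after re-inserting the parallel edges as overlapping segments, a drawing of $Q$ with possibly many more than $N$ crossing-pairs; a bound on $\RCR{Q'}$ does not give a bound on $\RCR{Q}$. If you drop the reduction and work with the multigraph directly, your bundle bound fails: with $u_1\in S_a$, $u_2\in S_b$, $C_{u_1}=C_{u_2}=\{a,b\}$, $m(u_1a)=m(u_2b)=1$ and $m(u_1b)=m(u_2a)=\Delta-2$, one gets $\Delta(Q)=\Delta$ but $|\beta_{ab}|=2\Delta-3$, so type-(a) contributes on the order of $\Delta^2$ per crossing of $D$, not $(\Delta+1)^2$. (A minor second point: your type~(b) should be ``both edges have an endpoint in some $B_v$,'' not ``the crossing point lies in $C_v$''; two edges from $\beta_{ab}$ and $\beta_{ac}$ can cross just outside $C_a$. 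Your \emph{count} for type~(b) only uses the former, so this is a phrasing issue, not a counting error.) Note, incidentally, that the paper's own proof, via Lemma~\ref{lemma:hpartitionrcn}, actually derives $(\Delta+1)^4\,\RCR{G}+\Delta^2\,\|Q\|$; the exponents in the lemma's statement and its proof are swapped.
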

\begin{proof}
Let $S = V(Q)- V(G)$. We now define an $H$-partition of $Q$. To start, we make $H$ isomorphic to $G$ and put each $v \in V(G)$ in the bag $B_v$ in $H$. 
 For each vertex $u\in S$, $u$ is adjacent to all the vertices of some clique $C$ in $G$. Place $u$ in a bag $B_v$ where $v\in C$. This does not change $H$ since $v$ is adjacent to all the neighbours of $u$ in $G$. This defines an $H$-partition of multigraph $Q$. 
  
  For each $v\in H$, each vertex of $S$ in $B_v$ is adjacent to $v$ in $Q$ thus the width of $H$ is at most $\Delta(Q)+1$. Thus by Lemma \ref{lemma:hpartitionrcn}, $\RCR{Q} \leq (\Delta(Q)+1)^4\cdot \RCR{H} + \Delta(Q)^2\cdot ||Q|| $ which is equal to $(\Delta(Q)+1)^4\cdot \RCR{G} + \Delta(Q)^2\cdot ||Q||$ since  $H$ is isomorphic to $G$.
\end{proof}

\end{document}